\def\red{\color{red}}
\def\ls{\lesssim}
\def\gs{\gtrsim}
\def\fz{\infty}
\renewcommand{\r}{\right}
\newcommand{\lf}{\left}
\def\ls{\lesssim}
\def\gs{\gtrsim}
\def\rr{{\mathbb R}}
\def\rn{{{\rr}^n}}
\def\nn{{\mathbb N}}
\def\cc{{\mathbb C}}
\def\lz{\lambda}
\def\bz{\beta}
\def\fai{\varphi}
\def\gz{{\gamma}}
\def\ls{\lesssim}
\def\gs{\gtrsim}
\def\boz{\Omega}
\def\einf{\mathop\mathrm{\,ess\,inf\,}}
\def\hs{\hspace{0.3cm}}
\newtheorem{theorem}{Theorem}[section]
\newtheorem{lemma}[theorem]{Lemma}
\newtheorem{corollary}[theorem]{Corollary}
\theoremstyle{definition}
\newtheorem{remark}[theorem]{Remark}
\newtheorem{definition}[theorem]{Definition}
\newtheorem{assumption}[theorem]{Assumption}
\def\loc{{\mathop\mathrm{loc}}}
\numberwithin{equation}{section}
\begin{document}

\arraycolsep=1pt

\title{\Large\bf New Characterizations of Musielak--Orlicz--Sobolev Spaces via Sharp Ball Averaging Functions
\footnotetext{\hspace{-0.35cm} 2010 {\it Mathematics Subject
Classification}. {Primary 46E35; Secondary 42B35, 42B25.}
\endgraf{\it Key words and phrases}. Musielak--Orlicz--Sobolev space, variable exponent Sobolev space,
Orlicz--Sobolev space, sharp ball averaging function.
\endgraf This work is supported by the
National Natural Science Foundation  of China (Grant Nos. 11871254,
11571289, 11571039,  11761131002, 11726621, 11671185 and 11871100).}}
\author{Sibei Yang, Dachun Yang\,\footnote{Corresponding author/{\red October 4, 2018}/Final version.}\ \
and Wen Yuan}
\date{ }
\maketitle

\vspace{-0.6cm}

\begin{center}
\begin{minipage}{13.5cm}\small
{{\bf Abstract.}
In this article, the authors establish a new characterization of the Musielak--Orlicz--Sobolev space
on $\mathbb{R}^n$, which includes the classical Orlicz--Sobolev space, the weighted Sobolev space
and the variable exponent Sobolev space as special cases, in terms of sharp ball averaging functions.
Even in a special case, namely, the variable exponent Sobolev space, the obtained
result in this article improves the corresponding result obtained by P. H\"ast\"o and A. M. Ribeiro
[Commun. Contemp. Math. 19 (2017), 1650022, 13 pp] via weakening the assumption $f\in L^1(\mathbb R^n)$
into $f\in L_{\mathop\mathrm{loc}}^1(\mathbb R^n)$, which was conjectured to be true by H\"ast\"o and Ribeiro
in the aforementioned same article.}
\end{minipage}
\end{center}

\vspace{0.2cm}

\section{Introduction\label{s1}}

 Let $s\in(0,1)$ and $p\in[1,\fz)$. In what follows, we use the \emph{symbol $L^p(\rn)$}
to denote the space of all Lebesgue measurable functions $f$ on $\rn$ such that
$$\|f\|_{L^p(\rn)}:=\lf[\int_\rn|f(x)|^p\,dx\r]^{1/p}<\fz$$
and $L^p(\rn\times\rn)$ is similarly defined via replacing $\rn$ by $\rn\times\rn$.
The \emph{fractional order Sobolev space}
$W^{s,\,p}(\rn)$ is defined by setting
$$W^{s,\,p}(\rn):=\lf\{f\in L^p(\rn):\ \frac{f(x)-f(y)}{|x-y|^{\frac{n}{p}+s}}\in L^p(\rn\times\rn)\r\}
$$
equipped with the \emph{norm}
$$\|f\|_{W^{s,\,p}(\rn)}:=\lf\{\int_\rn\int_\rn\frac{|f(x)-f(y)|^p}{|x-y|^{n+sp}}\,dx\,dy\r\}^{1/p}.
$$
Bourgain, Brezis and Mironescu \cite{bbm01,bbm02} studied the limit behavior of the norm
$\|\cdot\|_{W^{s,\,p}(\rn)}$ as $s\uparrow1$, here and hereafter, the \emph{symbol} $s\uparrow1$
means that $s\in(0,1)$ increasingly converges to $1$ . More precisely, Bourgain, Brezis and Mironescu
\cite{bbm01} showed that, for any $f\in L^p(\rn)$, it holds true that
\begin{equation}\label{dq-1}
\lim_{s\uparrow1}(1-s)\int_\rn\int_\rn\frac{|f(x)-f(y)|^p}{|x-y|^{n+sp}}\,dx\,dy=C_{(n,\,p)}\int_\rn
|\nabla f(x)|^p\,dx,
\end{equation}
where $C_{(n,\,p)}$ is an explicit positive constant depending on $n$ and $p$.
Recently, these results have been generalized to the cases of the so-called magnetic space \cite{sv16}  and
the Orlicz--Sobolev space \cite{fs17}.

In what follows, we use the \emph{symbol $L^1_\loc(\rn)$} to
denote the space of all locally integrable functions on $\rn$.
Let $\mathcal{P}(\rn)$ be the set of all measurable functions
$p:\,\rn\to[1,\fz)$. For any $p\in\mathcal{P}(\rn)$, let
\begin{equation}\label{p-1}
p^+:=\mathop{\mathrm{ess\,sup}}\limits_{x\in\rn}p(x)\ \
\text{and} \ \ p^-:=\mathop{\mathrm{ess\,inf}}\limits_{x\in\rn}p(x).
\end{equation}
For any given $p\in\mathcal{P}(\rn)$,
the \emph{variable exponent modular} $\rho_{p(\cdot)}$
is defined by setting, for any $f\in L^1_\loc(\rn)$,
$$\rho_{p(\cdot)}(f):=\int_{\rn}|f(x)|^{p(x)}\,dx.
$$
The \emph{variable exponent Lebesgue space} $L^{p(\cdot)}(\rn)$ is defined by setting
\begin{eqnarray*}
L^{p(\cdot)}(\rn):&&=\big\{f\ \text{is measurable on}\ \rn:\ \\
&&\quad\quad\quad\quad\text{there exists}\ \lz\in(0,\fz)\
\text{such that}\ \rho_{p(\cdot)}(\lz f)<\fz\big\}
\end{eqnarray*}
equipped with the \emph{Luxemburg} (also called the {\it Luxembourg--Nakano}) \emph{norm}
$$\|f\|_{L^{p(\cdot)}(\rn)}:=\inf\lf\{\lz\in(0,\fz):\  \rho_{p(\cdot)}\lf(\frac{f}{\lz}\r)\le1\r\}.
$$
Moreover, the \emph{variable exponent Sobolev space} $W^{1,\,p(\cdot)}(\rn)$ is defined by setting
$$W^{1,\,p(\cdot)}(\rn):=\lf\{u\in L^{p(\cdot)}(\rn):\ |\nabla u|\in L^{p(\cdot)}(\rn)\r\}.
$$
For more studies on variable exponent Lebesgue spaces and Sobolev spaces,
we refer the reader to \cite{cf13,dhhr11}.

However, in the case of variable exponent Sobolev spaces, the similar characterization as in
\cite{bbm01,fs17,sv16} does not hold true, because that the translation operator may not
be bounded on the variable exponent Lebesgue space. Indeed, from \cite[Proposition 3.6.1]{dhhr11}, it follows that
the translation operator is bounded on the variable exponent Lebesgue space $L^{p(\cdot)}(\rn)$
if and only if $p$ is a constant. In particular, this means that, in the different quotient
appearing in \eqref{dq-1}, we can not replace the constant exponent $p$
simply by the variable exponent $p(x)$ or $p(y)$.
The same problem also appears in \cite{f02}. Instead of this, Diening and H\"ast\"o \cite{dh07}
creatively replaced the different quotient in the definition of the trace space
by the \emph{sharp averaging operator} $M^\sharp_{B}$ (see, for example, \cite[(1.3)]{hr17}
or \eqref{so} below). Motivated by the
work \cite{dh07}, under the assumptions that the variable exponent
$p(\cdot)$ satisfies the local log-H\"older continuity condition, the log-H\"older decay condition (at infinity)
and $p^-\in(1,\fz)$ with $p^-$ as in \eqref{p-1}, H\"ast\"o and Ribeiro
\cite[Theorem 4.1]{hr17} obtained a new characterization of
the variable exponent Sobolev space in terms of the sharp averaging operator, which is re-stated as
Theorem \ref{hr17thm} below.

As a natural generalization of the Lebesgue space, the
Orlicz space was introduced by Birnbaum and Orlicz \cite{bo31} and
Orlicz \cite{o32}. Since then, the theory of Orlicz spaces has been well
developed and these spaces have been widely used in probability, statistics, potential theory,
partial differential equations, as well as harmonic analysis and some
other fields of analysis (see, for example, \cite{am01,cm16,n50,n51,rr91,rr02}).
Later, Musielak \cite{m83} introduced the so-called Musielak--Orlicz space,
which contains the Orlicz space and the weighted Lebesgue space as special cases.
Nowadays, the theory of both Musielak--Orlicz spaces and function spaces of Musielak--Orlicz type
has been well developed and theses spaces have been widely used in many branches of mathematics.
It is worth pointing out that Musielak--Orlicz spaces or Musielak--Orlicz--Sobolev spaces
naturally appear in the study of the regularity for solutions of some nonlinear elliptic equations
or minimizers of functionals with non-standard growth (see, for example, \cite{am05,am01,cm16,cm15}).
We also refer the reader to \cite{os15,os16,ylk17,ya17}
for some recent progresses about the real-variable theory of both Musielak--Orlicz--Sobolev spaces and
function spaces of Musielak--Orlicz type.

In this article, motivated by \cite{dh07,fs17,hr17},
we obtain a new characterization of the Musielak--Orlicz--Sobolev space
on $\mathbb{R}^n$, including the classical Orlicz--Sobolev space, the weighted
Sobolev space and the variable exponent Sobolev space,
in terms of sharp ball averaging functions. Even in the special case, namely, the
variable exponent Sobolev space, the obtained
result in this article improves the corresponding result obtained in
\cite[Theorem 4.1]{hr17} via weakening the assumption $f\in L^1(\mathbb R^n)$
into $f\in L_\loc^1(\mathbb R^n)$, which positively confirms a conjecture proposed
by H\"ast\"o and Ribeiro in \cite[Remark 4.1]{hr17}.

To describe the main result of this article, we first recall some necessary notions and notation.
\begin{definition}\label{d1.1}
\begin{itemize}
\item[(i)] A function $G:\,[0,\fz)\to[0,\fz)$ is called an \emph{Orlicz function} if it satisfies
the following conditions:
\begin{itemize}
  \item[(i)$_1$] $G$ is continuous, convex, increasing and $G(0)=0$;
  \item[(i)$_2$] $\lim\limits_{t\to0^+}\frac{G(t)}{t}=0$ and $\lim\limits_{t\to\fz}\frac{G(t)}{t}=\fz$.
\end{itemize}

\item[(ii)] A function $\Phi:\,\rn\times[0,\fz)\to[0,\fz)$ is called a \emph{Musielak--Orlicz function}
if, for any given growth variable $t\in[0,\fz)$, $\Phi(\cdot,t)$ is measurable and, for almost every 
space variable $x\in\rn$, the function
$\Phi(x,\cdot)$ is an Orlicz function.

\item[(iii)] Let $\Phi$ be a Musielak--Orlicz function. Then the \emph{complementary function} of $\Phi$, denoted by
$\Phi^\ast$, is defined by setting, for any $x\in\rn$ and $s\in[0,\fz)$,
$$\Phi^\ast(x,s):=\sup_{t\in[0,\fz)}\{st-\Phi(x,t)\}.$$
\end{itemize}
\end{definition}

The Musielak--Orlicz space $L^\Phi(\rn)$ and the Musielak--Orlicz--Sobolev space
$W^{1,\,\Phi}(\rn)$ are defined as follows.

\begin{definition}\label{d1.2}
Let $\Phi$ be a Musielak--Orlicz function. For any given $f\in L^1_\loc(\rn)$,
the \emph{Musielak--Orlicz modular} of $f$ is defined by setting
$$\rho_\Phi(f):=\int_{\rn}\Phi(x,|f(x)|)\,dx.
$$
Then the \emph{Musielak--Orlicz space} $L^\Phi(\rn)$ is defined by setting
\begin{eqnarray*}
L^\Phi(\rn):&&=\big\{u\ \text{is measurable on}\ \rn:\ \\
&&\quad\quad\quad\quad\text{there exists}\ \lz\in(0,\fz)\
\text{such that}\ \rho_\Phi(\lz f)<\fz\big\}
\end{eqnarray*}
equipped with the \emph{Luxemburg} (also called the {\it Luxembourg--Nakano}) \emph{norm}
\begin{equation}\label{eq-Phi}
\|u\|_{L^\Phi(\rn)}:=\inf\lf\{\lz\in(0,\fz):\
\int_{\rn}\Phi\lf(x,\frac{|u(x)|}{\lz}\r)\,dx\le1\r\}.
\end{equation}
Moreover, the \emph{Musielak--Orlicz--Sobolev space} $W^{1,\,\Phi}(\rn)$ is defined by setting
$$W^{1,\,\Phi}(\rn):=\lf\{u\in L^\Phi(\rn):\ |\nabla u|\in L^\Phi(\rn)\r\}
$$
equipped with the \emph{norm}
$$\|u\|_{W^{1,\,\Phi}(\rn)}:=\|u\|_{L^\Phi(\rn)}+\||\nabla u|\|_{L^\Phi(\rn)},
$$
where $\||\nabla u|\|_{L^\Phi(\rn)}$ is defined via replacing $u$ by $|\nabla u|$ in \eqref{eq-Phi}.

Furthermore, the \emph{homogeneous Musielak--Orlicz Sobolev space} $\dot{W}^{1,\,\Phi}(\rn)$
is defined by setting
$$\dot{W}^{1,\,\Phi}(\rn):=\lf\{u\in L^1_\loc(\rn):\ |\nabla u|\in L^\Phi(\rn)\r\}
$$
equipped with the \emph{norm} $\|u\|_{\dot{W}^{1,\,\Phi}(\rn)}:=\||\nabla u|\|_{L^\Phi(\rn)}$.
\end{definition}

In what follows, for any $x\in\rn$ and $r\in(0,\fz)$, we always let
$$B(x,r):=\{y\in\rn:\ |y-x|<r\}$$
be a \emph{ball} of $\rn$ with the center $x$ and the radius $r$.

\begin{definition}\label{d1.3}
Let $\{\psi_{\epsilon}\}_{\epsilon>0}$ be a family of functions in $L^1([0,1])$ such that,
for any $\epsilon\in(0,\fz)$,
$$\psi_\epsilon\ge0,\quad\quad \int_0^1\psi_\epsilon(r)\,dr=1
$$
and, for any $\gamma\in(0,1)$,
$$\lim_{\epsilon\to0^+}\int_{\gamma}^1\psi_\epsilon(r)\,dr=0,
$$
here and hereafter, the \emph{symbol} $\epsilon\to0^+$ means that $\epsilon\in (0,\fz)$ and $\epsilon\to 0$.

Assume that $\Phi$ is a Musielak--Orlicz function.
For any $\epsilon\in(0,\fz)$ and $f\in L_{\loc}^1(\rn)$,
let
$$\rho^\epsilon_{\sharp}(f):=
\int_0^1\lf[\int_\rn\Phi\lf(x,\frac{1}{r}M^\sharp_{B(x,r)}(f)\r)\,dx\r]\psi_\epsilon(r)\,dr,
$$
where the \emph{sharp ball averaging function $M^\sharp_{B(x,r)}(f)$}
of $f$ is defined by setting, for any
$f\in L_{\loc}^1(\rn)$, $x\in\rn$ and $r\in(0,\fz)$,
\begin{equation}\label{so}
M^\sharp_{B(x,r)}(f):=\frac{1}{|B(x,r)|}\int_{B(x,r)}|f(y)-f_{B(x,r)}|\,dy
\end{equation}
and
$$f_{B(x,r)}:=\frac{1}{|B(x,r)|}\int_{B(x,r)}f(y)\,dy.
$$
Recall that $M^\sharp_{B(x,r)}$ is also called the \emph{sharp averaging operator} in
\cite[p.\,1650022-2]{hr17}.
Moreover, for any given $\epsilon\in(0,\fz)$ and $f\in L_{\loc}^1(\rn)$,
the \emph{norm} $\|f\|_{\Phi,\,\sharp}^\epsilon$ is defined by setting
$$\|f\|_{\Phi,\,\sharp}^\epsilon:=\inf\lf\{\lz\in(0,\fz):\
\rho^\epsilon_{\sharp}\lf(\frac{f}{\lz}\r)\le1\r\}.
$$
\end{definition}

We point out that several examples of such families of functions
$\{\psi_{\epsilon}\}_{\epsilon>0}$ as in
Definition \ref{d1.3} were given by Brezis \cite[Remark 8]{b02}.

Recall that a function $p:\ \rn\to\rr$ is said to satisfy the \emph{local
log-H\"older continuity condition} if there exists a positive constant $C$ such that, for any
$x,\,y\in\rn$ with $x\neq y$,
$$|p(x)-p(y)|\le\frac{C}{\log(e+\frac{1}{|x-y|})};
$$
a function $p:\ \rn\to\rr$ is said to satisfy the \emph{log-H\"older decay condition} (at infinity)
if there exist positive constants $C\in(0,\fz)$ and $p_\fz\in[1,\fz)$ such that, for any
$x\in\rn$,
$$|p(x)-p_\fz|\le\frac{C}{\log(e+|x|)}.
$$
If a function $p$ satisfies both the local log-H\"older continuity condition
and the log-H\"older decay condition,
then the function $p$ is said to satisfy the \emph{log-H\"older continuity condition}.
It is easy to see that, if $p$ satisfies the log-H\"older continuity condition, then $p$
is bounded and $p^+<\fz$ with $p^+$ as in \eqref{p-1}.

For the variable exponent Sobolev space,
the following conclusion was established in \cite[Theorem 4.1]{hr17}.

\setcounter{theorem}{0}
\renewcommand{\thetheorem}{\arabic{section}.\Alph{theorem}}

\begin{theorem}\label{hr17thm}
Let $p\in\mathcal{P}(\rn)$ satisfy the log-H\"older continuity condition and $p^-\in(1,\fz)$
with $p^-$ as in \eqref{p-1}, and $\{\psi_\epsilon\}_{\epsilon>0}$ be a family of
functions as in Definition \ref{d1.3}.
For any $x\in\rn$ and $t\in[0,\fz)$, let $\Phi(x,t):=t^{p(x)}$. Assume further that $f\in L^1(\rn)$.
Then $|\nabla f|\in L^{p(\cdot)}(\rn)$ if and only if
\begin{equation*}
\limsup_{\epsilon\to0^+}\rho^\epsilon_{\sharp}(f)<\fz,
\end{equation*}
here and hereafter, $\nabla f$ denotes the gradient of $f$.
In this case,
\begin{equation*}
\lim_{\epsilon\to0^+}\rho^\epsilon_{\sharp}(f)=\rho_{p(\cdot)}(c_0|\nabla f|)
\end{equation*}
and
\begin{equation*}
\lim_{\epsilon\to0^+}\|f\|_{\Phi,\,\sharp}^\epsilon=c_0\||\nabla f|\|_{L^{p(\cdot)}(\rn)},
\end{equation*}
where $c_0:=\frac{1}{|B(\vec{0}_{n},1)|}\int_{B(\vec{0}_{n},1)}|x\cdot e_1|\,dx$
with $\vec{0}_{n}$ being the \emph{origin} of $\rr^{n}$ and
$e_1:=(1,\,\overbrace{0,\,\ldots,\,0}^{n-1\ \mathrm{times}})$.
\end{theorem}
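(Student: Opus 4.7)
The plan is to connect the sharp ball averaging function $M^\sharp_{B(x,r)}(f)$ to $|\nabla f|$ via the Poincar\'e inequality, compute the pointwise limit as $r\to 0^+$ via a first-order Taylor expansion of $f$, and handle the variable exponent using the Hardy--Littlewood maximal function under the log-H\"older hypothesis.

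For the direction $|\nabla f|\in L^{p(\cdot)}(\rn)\Rightarrow\limsup_{\epsilon\to 0^+}\rho_\sharp^\epsilon(f)<\fz$, I would first apply the classical Poincar\'e inequality on balls to get, for a.e.\ $x\in\rn$ and any $r\in(0,\fz)$,
$$\frac{1}{r}M^\sharp_{B(x,r)}(f)\ls\frac{1}{|B(x,r)|}\int_{B(x,r)}|\nabla f(y)|\,dy\le \mathcal{M}(|\nabla f|)(x),$$
where $\mathcal{M}$ is the Hardy--Littlewood maximal operator. Since $p$ satisfies the log-H\"older continuity condition with $p^->1$, $\mathcal{M}$ is bounded on $L^{p(\cdot)}(\rn)$, so $\mathcal{M}(|\nabla f|)\in L^{p(\cdot)}(\rn)$ and, by the norm--modular relation, the function $x\mapsto\Phi(x,C\mathcal{M}(|\nabla f|)(x))$ lies in $L^1(\rn)$, dominating $\Phi(x,\tfrac{1}{r}M^\sharp_{B(x,r)}(f))$ uniformly for $r\in(0,1]$ and a.e.\ $x\in\rn$. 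A first-order Taylor expansion of $f$ at $x$ (first established for $f\in C^\infty$ and then extended by mollification) gives, for a.e.\ $x\in\rn$,
$$\lim_{r\to 0^+}\frac{1}{r}M^\sharp_{B(x,r)}(f)=\frac{1}{|B(\vec{0}_n,1)|}\int_{B(\vec{0}_n,1)}|\nabla f(x)\cdot z|\,dz=c_0|\nabla f(x)|,$$
the last equality coming from rotational invariance of the unit ball. The Lebesgue dominated convergence theorem then yields $\lim_{r\to 0^+}\int_\rn\Phi(x,\tfrac{1}{r}M^\sharp_{B(x,r)}(f))\,dx=\rho_{p(\cdot)}(c_0|\nabla f|)$. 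To pass this to $\epsilon\to 0^+$, split $\int_0^1\psi_\epsilon(r)\,dr$ into $\int_0^\gamma+\int_\gamma^1$; the properties $\int_0^1\psi_\epsilon=1$ and $\int_\gamma^1\psi_\epsilon\to 0$ then convert convergence in $r$ into the desired convergence of $\rho_\sharp^\epsilon(f)$.

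For the converse, assuming $\limsup_{\epsilon\to 0^+}\rho_\sharp^\epsilon(f)<\fz$, we must extract a weak gradient of $f$ lying in $L^{p(\cdot)}(\rn)$. The approach I would take is to test $f$ against $\divz\phi$ for $\phi\in C_c^\infty(\rn;\rn)$: using $\int_{B(x,r)}[f(y)-f_{B(x,r)}]\,dy=0$, one can compare $\int_\rn f(x)\,\divz\phi(x)\,dx$ with a double integral containing $(f(y)-f_{B(x,r)})$ against finite differences of $\phi$, averaged via $\psi_\epsilon$. A H\"older-in-the-pair $(\Phi,\Phi^\ast)$ estimate then bounds this integral by $\rho_\sharp^\epsilon(f)\|\phi\|_{L^{\Phi^\ast}(\rn)}$; letting $\epsilon\to 0^+$ produces the weak gradient as a bounded linear functional on $L^{\Phi^\ast}(\rn)$, which, together with Fatou's lemma applied to the pointwise identity above, simultaneously gives $|\nabla f|\in L^{p(\cdot)}(\rn)$ and the matching lower bound $\rho_{p(\cdot)}(c_0|\nabla f|)\le\liminf_{\epsilon\to 0^+}\rho_\sharp^\epsilon(f)$.

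The main obstacle is this converse step: carefully coupling the $\psi_\epsilon$-averaging in $r$ with integration by parts to extract the weak gradient, while legitimately exchanging orders of integration, is delicate and crucially uses the global hypothesis $f\in L^1(\rn)$ (rather than only $L_{\loc}^1(\rn)$) to control boundary terms in the test-function pairing. Finally, the norm identity $\lim_{\epsilon\to 0^+}\|f\|_{\Phi,\sharp}^\epsilon=c_0\||\nabla f|\|_{L^{p(\cdot)}(\rn)}$ follows by applying the modular convergence to $f/\lambda$ and $c_0|\nabla f|/\lambda$ for every $\lambda\in(0,\fz)$ and using the monotonicity of the Luxemburg functional in $\lambda$, which is legitimate since $p^+<\fz$ is automatic under the log-H\"older hypothesis.
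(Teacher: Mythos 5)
Your forward direction (from $|\nabla f|\in L^{p(\cdot)}(\rn)$ to the modular limit) takes a genuinely different route from the paper's. The paper first proves the modular identity for $f\in C^2_c(\rn)$ by an explicit Taylor expansion with quantitative error control (Lemma~\ref{l2.1}), then passes to general $f$ by density of $C^\infty_c(\rn)$ in $\dot W^{1,\Phi}(\rn)$, using the subtle $\Delta_2$ estimates of Lemma~\ref{l2.2}(iii) and the uniform bound $\rho^\epsilon_\sharp(f)\ls\max\{\||\nabla f|\|_{L^\Phi(\rn)},\||\nabla f|\|^\gamma_{L^\Phi(\rn)}\}$ of Lemma~\ref{l2.3}. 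You instead invoke the a.e.\ $L^1$-differentiability of Sobolev functions to get the pointwise limit $\tfrac{1}{r}M^\sharp_{B(x,r)}(f)\to c_0|\nabla f(x)|$ directly, dominate by $\Phi(x,C\mathcal{M}(|\nabla f|)(x))$ using the Poincar\'e inequality and the boundedness of $\mathcal{M}$ on $L^{p(\cdot)}(\rn)$, and finish by dominated convergence plus the elementary $\int_\gamma^1\psi_\epsilon\to 0$ splitting. This is shorter and conceptually cleaner, at the cost of relying on the Calder\'on--Zygmund a.e.\ $L^1$-differentiability theorem (a nontrivial fact that the paper's $C^2_c$-plus-density route avoids). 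Your Step~3 (passing from the modular identity to the norm identity) matches the paper's argument.

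The converse direction, however, has a genuine gap. You propose to test $f$ against $\divz\phi$ and ``compare $\int_\rn f\,\divz\phi$ with a double integral containing $(f(y)-f_{B(x,r)})$ against finite differences of $\phi$,'' but this comparison is never written down, and it is not clear how to produce it: the sharp ball averaging function $M^\sharp_{B(x,r)}(f)$ involves the absolute value $|f(y)-f_{B(x,r)}|$, which destroys the linearity needed in the integration-by-parts pairing. If one tries to make your idea precise by splitting $f=f_{B(\cdot,r)}+(f-f_{B(\cdot,r)})$ and integrating by parts on the smooth piece $f_{B(\cdot,r)}$, the resulting gradient $\nabla_y f_{B(y,r)}$ is a \emph{sphere} average of $f-f_{B(y,r)}$ over $\partial B(y,r)$, not a ball average, and converting one to the other requires a further integration in $r$ with careful bookkeeping. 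The paper sidesteps all of this (see Step~2 of the proof of Theorem~\ref{t1.1}): it mollifies $f$ to $G_\delta\ast f$, observes the crucial pointwise inequality $M^\sharp_{B(x,r)}(G_\delta\ast f)\le G_\delta\ast M^\sharp_{B(\cdot,r)}(f)(x)$, applies the already-proved smooth case to deduce that $\{\nabla(G_\delta\ast f)\}_{\delta>0}$ is bounded in $L^\Phi(\rn)$, and extracts a weak limit via reflexivity of $L^\Phi(\rn)$ (Lemma~\ref{l2.4}) and the Eberlein--\v{S}mulian theorem, identifying the limit with $\nabla f$. This mollification-plus-weak-compactness route is both cleaner and, importantly, what lets the paper weaken $f\in L^1(\rn)$ to $f\in L^1_\loc(\rn)$, since everything happens locally. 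Your sketch, as written, does not establish the converse; you would need to fully work out the comparison step and resolve the sphere-vs-ball mismatch, and I am not convinced this can be done without effectively re-deriving the mollification argument.
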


\setcounter{theorem}{0}\renewcommand{\thetheorem}{\arabic{section}.\arabic{theorem}}
\setcounter{theorem}{3}

\begin{remark}\label{hr17} We point out that, in \cite[Remark 4.1]{hr17},
H\"ast\"o and Ribeiro also conjectured that Theorem \ref{hr17thm} still holds true
if $f\in L^1(\rn)$ is replaced by $f\in L^1_\loc(\rn)$, which is confirmed
by Corollary \ref{c1.2}(i) below, as a simple corollary of Theorem \ref{t1.1} below.
\end{remark}

To state the main result of this article, we first give
some assumptions on the Musielak--Orlicz function $\Phi$.

\begin{assumption}\label{a1}
The Musielak--Orlicz function $\Phi$ is locally integrable on the space variable in $\rn$, namely,
for any given positive constant $c$ and any compact set $K\subset\rn$, we have
$$\int_{K}\Phi(x,c)\,dx<\fz.
$$
\end{assumption}

\begin{assumption}\label{a2}
The Musielak--Orlicz function $\Phi$ satisfies the \emph{$\Delta_2$-condition} on 
the growth variable, namely, there exists  a positive
constant $\kappa\in(1,\fz)$ such that, for almost every $x\in\rn$ and any $s\in[0,\fz)$,
$$\Phi(x,2s)\le\kappa\Phi(x,s).
$$
\end{assumption}

\begin{assumption}\label{a3}
The Musielak--Orlicz function $\Phi$ has the property:
$C^\fz_c(\rn)$ is dense in the homogeneous Musielak--Orlicz--Sobolev space $\dot{W}^{1,\Phi}(\rn)$
with respect to the norm $\|\cdot\|_{\dot{W}^{1,\Phi}(\rn)}$,
where the \emph{symbol} $C^\fz_c(\rn)$ denotes the set of all $C^\fz$ functions on $\rn$
with compact supports.
\end{assumption}

\begin{assumption}\label{a4}
The Hardy--Littlewood maximal operator $M$ is bounded on the
Musielak--Orlicz space $L^\Phi(\rn)$, namely,
there exists a positive constant $C$ such that, for any $f\in L^\Phi(\rn)$,
$$\|M(f)\|_{L^\Phi(\rn)}\le C\|f\|_{L^\Phi(\rn)}.
$$
Here and hereafter, $M(f)$ denotes the \emph{Hardy--Littlewood maximal function} of $f$,
which is defined by setting, for any given $f\in L^1_\loc(\rn)$ and $x\in\rn$,
$$M(f)(x):=\sup_{x\in B}
\frac{1}{|B|}\int_{B}|f(y)|\,dy,
$$
where the supremum  is taken over all balls $B\subset\rn$ containing $x$.
\end{assumption}

\begin{remark}\label{r1.1}
Here we give some examples of Musielak--Orlicz functions satisfying
Assumptions \ref{a3} and \ref{a4} as follows.
\begin{itemize}
  \item[\rm(i)] For any $x\in\rn$ and $t\in[0,\fz)$,
  let $\Phi(x,t):=t^{p(x)}$, where $p(\cdot)$ is as in Theorem \ref{hr17thm}.
  It is known that $\Phi$ satisfies Assumptions \ref{a3} and \ref{a4}
  (see, for example, \cite[Theorems 9.1.6 and 4.3.8]{dhhr11}).

  \item[\rm(ii)] For any $x\in\rn$ and $t\in[0,\fz)$, let $\Phi(x,t):=\fai(t)$,
  where $\fai$ is an Orlicz function satisfying the $\Delta_2$-condition.
  It is well known that Assumption \ref{a3} holds true for such $\Phi$
(see, for example, \cite[Theorem 8.31]{af03}). Moreover,
  by \cite[Theorem 2.1]{g88} (see also \cite[Theorem 1.2.1]{kk91}),
we find that $\Phi$ satisfies Assumption \ref{a4}.
  \item[\rm(iii)] Let $p\in[1,\fz)$.
Recall that an almost everywhere non-negative and locally integrable
function $\omega$ on $\rn$ is called an \emph{$A_p(\rn)$ weight} if
\begin{equation*}
[\omega]_{A_p(\rn)}:=\sup_{B\subset\rn}\lf\{\frac{1}{|B|}\int_{B}w(x)\,dx\r\}\lf\{
\frac{1}{|B|}\int_{B}[w(x)]^{-\frac{1}{p-1}}\,dx\r\}^{p-1}<\fz
\end{equation*}
when $p\in(1,\fz)$, and
\begin{equation*}
[\omega]_{A_1(\rn)}:=\sup_{B\subset\rn}\lf\{\frac{1}{|B|}\int_{B}w(x)\,dx\r\}
\lf\{\einf_{y\in B}w(y)\r\}^{-1}<\fz,
\end{equation*}
where the suprema are taken over all balls $B\subset\rn$.

For any $x\in\rn$ and $t\in[0,\fz)$, let $\Phi(x,t):=\omega(x)t^{q}$,
where $q\in(1,\fz)$ and $\omega\in A_q(\rn)$. From \cite[Theorem 2.1.4]{t00},
it follows that such $\Phi$ satisfies Assumption \ref{a3}.
Furthermore, it is well known that Assumption \ref{a4}
holds true for $\Phi$ (see, for example, \cite[Theorem 7.1.9]{g14}).

\item[\rm(iv)] More examples of Musielak--Orlicz functions satisfying Assumption \ref{a3}
were given in \cite{agsy17} (see also \cite[Example 2.1]{ya17}).
Moreover, some necessary and sufficient conditions for the Musielak--Orlicz function
$\Phi$ satisfying Assumption \ref{a4} were established in \cite{d05,h15}.
\end{itemize}
\end{remark}

Now we state the main result of this article as follows.

\begin{theorem}\label{t1.1}
Let $\Phi$ be a Musielak--Orlicz function satisfying Assumptions \ref{a1} through \ref{a4}.
Assume that the complementary function $\Phi^\ast$ to $\Phi$
satisfies Assumptions \ref{a1} and \ref{a2},
$\{\psi_\epsilon\}_{\epsilon>0}$ is a family of functions
as in Definition \ref{d1.3} and $f\in L^1_\loc(\rn)$. Then $|\nabla f|\in L^\Phi(\rn)$ if and only if
\begin{equation}\label{1.1}
\limsup_{\epsilon\to0^+}\rho^\epsilon_{\sharp}(f)<\fz.
\end{equation}
In this case,
\begin{equation}\label{1.2}
\lim_{\epsilon\to0^+}\rho^\epsilon_{\sharp}(f)=\rho_\Phi(c_0|\nabla f|)
\end{equation}
and
\begin{equation}\label{1.3}
\lim_{\epsilon\to0^+}\|f\|_{\Phi,\,\sharp}^\epsilon=c_0\||\nabla f|\|_{L^\Phi(\rn)},
\end{equation}
where $c_0$ is the same as in Theorem \ref{hr17thm}.
\end{theorem}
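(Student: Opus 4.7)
The plan is to prove the two implications of Theorem \ref{t1.1} separately, following the general roadmap laid out by H\"ast\"o--Ribeiro in the variable exponent case but adapting it to the Musielak--Orlicz setting and, crucially, replacing the global integrability $f\in L^1(\rn)$ by the local one $f\in L^1_\loc(\rn)$. Before starting, it is useful to recall the roles of the four assumptions: Assumption \ref{a1} ensures that modular integrals of locally bounded quantities are finite on compact sets; Assumption \ref{a2} ($\Delta_2$ for $\Phi$) guarantees that modular convergence is equivalent to norm convergence and that quasi-homogeneity estimates hold; Assumption \ref{a3} reduces everything to the smooth case through density of $C^\fz_c(\rn)$ in $\dot W^{1,\Phi}(\rn)$; and Assumption \ref{a4} (together with Assumption \ref{a2} applied to $\Phi^*$) allows us to dominate sharp averages by the Hardy--Littlewood maximal function while staying within $L^\Phi(\rn)$.

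For the sufficiency direction ($|\nabla f|\in L^\Phi(\rn)\Rightarrow$ \eqref{1.1}, \eqref{1.2}, \eqref{1.3}), the starting point is the pointwise identity
\begin{equation*}
\lim_{r\to0^+}\frac{1}{r}M^\sharp_{B(x,r)}(f)=c_0|\nabla f(x)|\quad\text{for a.e.\ }x\in\rn,
\end{equation*}
which holds trivially for $f\in C^\fz_c(\rn)$ by Taylor expansion: $f(y)-f_{B(x,r)}=\nabla f(x)\cdot(y-x)+o(r)$ uniformly in $y\in B(x,r)$, and averaging $|\nabla f(x)\cdot(y-x)|$ over $B(x,r)$ yields $c_0 r|\nabla f(x)|$ by rotational symmetry. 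Together with the Poincar\'e--type pointwise bound $r^{-1}M^\sharp_{B(x,r)}(f)\lesssim M(|\nabla f|)(x)$, valid for any $f\in W^{1,1}_\loc(\rn)$ via a standard representation, Assumption \ref{a4} and the $\Delta_2$-condition allow a Lebesgue dominated convergence argument in the modular to give
\begin{equation*}
\lim_{r\to0^+}\int_\rn\Phi\lf(x,\tfrac{1}{r}M^\sharp_{B(x,r)}(f)\r)dx=\int_\rn\Phi(x,c_0|\nabla f(x)|)\,dx.
\end{equation*}
Integration against $\psi_\epsilon(r)\,dr$, using that the mass concentrates near zero as $\epsilon\to0^+$, yields \eqref{1.2} for smooth $f$; the density in Assumption \ref{a3} then extends this to general $f\in\dot W^{1,\Phi}(\rn)$, and \eqref{1.3} follows from \eqref{1.2} via the standard modular--norm equivalence under the $\Delta_2$-condition. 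The passage from $L^1$ to $L^1_\loc$ at this step is transparent: the quantity $M^\sharp_{B(x,r)}(f)$ is invariant under adding a constant to $f$, so approximation by smooth functions can be carried out locally after subtracting suitable constants.

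The hard part will be the converse direction. Assuming \eqref{1.1} and only $f\in L^1_\loc(\rn)$, one must produce a weak gradient $\nabla f\in L^\Phi(\rn)$. The strategy is duality: for any vector field $\vec\varphi\in C^\fz_c(\rn)^n$, one rewrites $\int_\rn f\,\dive\vec\varphi\,dx$ as a limit (or average in $r$) of terms of the form $\int_\rn (f(y)-f_{B(x,r)})\,\Theta_{r,\vec\varphi}(x,y)\,dy\,dx$, estimates them by the generalized H\"older inequality in $L^\Phi(\rn)$ versus $L^{\Phi^*}(\rn)$ (this is where the Assumptions \ref{a1}, \ref{a2} on $\Phi^*$ enter, via the duality $(L^\Phi(\rn))^*=L^{\Phi^*}(\rn)$), and bounds the resulting expression in terms of $\rho^\epsilon_\sharp(f)$ with a gain that absorbs the factor $r$ against $\dive\vec\varphi$. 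Letting $\epsilon\to0^+$ gives $|\int_\rn f\,\dive\vec\varphi\,dx|\lesssim\|\vec\varphi\|_{L^{\Phi^*}(\rn)}$, and the Riesz representation in reflexive Musielak--Orlicz spaces (guaranteed by $\Delta_2$ on both $\Phi$ and $\Phi^*$) identifies the distributional gradient as an element of $L^\Phi(\rn)^n$, with a quantitative bound yielding \eqref{1.2}. The main obstacle here is two-fold: first, controlling the $x$-dependence of $\Phi(x,\cdot)$ during the duality pairing (handled by the H\"older inequality for Musielak--Orlicz functions and a careful truncation); second, combining this local estimate across all scales when only $f\in L^1_\loc(\rn)$ is assumed. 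This is precisely where the H\"ast\"o--Ribeiro argument used $f\in L^1(\rn)$; the improvement comes from the constant-invariance of $M^\sharp_{B(x,r)}$, which lets one work on balls of arbitrary radius after subtracting the mean, so that compactly supported test fields $\vec\varphi$ only see finite portions of $f$.
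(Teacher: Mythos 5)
Your sufficiency direction (assuming $|\nabla f|\in L^\Phi(\rn)$) follows the paper's roadmap: reduce to smooth compactly supported functions via the density of Assumption \ref{a3}, control the error with the Poincar\'e bound, Assumption \ref{a4} and the $\Delta_2$-property of $\Phi$ (the paper's Lemma \ref{l2.3}), and handle the smooth case by Taylor expansion of $M^\sharp_{B(x,r)}(f)$. Your dominated-convergence variant is a reasonable substitute for the paper's Lemma \ref{l2.1}, which instead proceeds directly from the Taylor remainder estimates and the quasi-triangle inequality of Lemma \ref{l2.2}(iii). The scaling derivation of \eqref{1.3} from \eqref{1.2} also matches the paper's Step 3. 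So that half is correct and essentially the same route.

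The necessity direction is where your proposal has a genuine gap and also departs from the paper. The paper mollifies: Jensen's inequality gives the pointwise commutation $M^\sharp_{B(x,r)}(G_\delta\ast f)\le \bigl(G_\delta\ast M^{\sharp}_{B(\cdot,r)}(f)\bigr)(x)$; combined with Assumption \ref{a4} and the quasi-homogeneity of Lemma \ref{l2.2} this yields $\rho^\epsilon_{\sharp}(G_\delta\ast f)\ls\rho^\epsilon_{\sharp}(f)+1$ uniformly in $\delta$; a \emph{local} version of the smooth-case identity (valid on each $B(\vec 0_n,R)$ because $G_\delta\ast f$ is $C^\fz$, and requiring no global integrability of $f$) then shows that $\{\nabla(G_\delta\ast f)\}_{\delta>0}$ is bounded in $L^\Phi(\rn)$; finally reflexivity (Lemma \ref{l2.4}) and Eberlein--$\check{\mathrm S}$mulian produce a weakly convergent subsequence whose limit must be $\nabla f$. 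In your duality sketch the central step is missing: you never produce the kernel $\Theta_{r,\vec\varphi}$ nor the identity expressing $\int_\rn f\,\dive\vec\varphi\,dx$ through $f(y)-f_{B(x,r)}$, and it is not clear how the \emph{nonlinear} modular bound $\rho^\epsilon_{\sharp}(f)<\fz$ (where $\Phi(x,\cdot)$ is applied to an absolute-value average) is to be converted into a \emph{linear}-in-$\vec\varphi$ pairing bounded by $\|\vec\varphi\|_{L^{\Phi^\ast}(\rn)}$. You acknowledge the two obstacles at the end (the $x$-dependence of $\Phi$ in the pairing; combining scales given only $f\in L^1_\loc(\rn)$) but supply no mechanism to overcome them; the constant-subtraction remark is not a substitute for the mollification-commutation trick, which is precisely what the paper uses to dispense with $f\in L^1(\rn)$. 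As written, the converse direction does not go through.
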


The detailed proof of Theorem \ref{t1.1} is presented in Section \ref{s2}.

To show Theorem \ref{t1.1}, we borrow some ideas from the proof of \cite[Theorem 4.1]{hr17}.
More precisely, we first prove that \eqref{1.2} holds true for functions in $C^\fz_c(\rn)$. Then,
for a function $f\in L^1_\loc(\rn)$ such that $|\nabla f|\in L^\Phi(\rn)$, by the technique of
approximation using functions from $C^\fz_c(\rn)$ and some finer properties of Musielak--Orlicz
functions, we show that \eqref{1.2} also holds true for such functions $f$.
Moreover, from \eqref{1.2} and the properties of Musielak--Orlicz
functions, we further deduce that \eqref{1.3} holds true. Comparing with \cite[Theorem 4.1]{hr17},
instead of $f\in L^1(\rn)$, we now only need to assume that $f\in L^1_\loc(\rn)$ in Theorem \ref{t1.1}.
To overcome the difficulty causing by this weaker assumption, in the proof of Theorem \ref{t1.1},
we flexibly use the boundedness of the Hardy--Littlewood maximal operator
on the Musielak--Orlicz space $L^\Phi(\rn)$,
which is assumed to hold true, and the subtle growth properties of Musielak--Orlicz functions
obtained in Lemma \ref{l2.2} below. Moreover, we point out that the assumed boundedness
of the Hardy--Littlewood maximal operator $M$ on the Musielak--Orlicz
space $L^\Phi(\rn)$ is known to hold true
for several well-known Musielak--Orlicz functions $\Phi$; see Corollary \ref{c1.2} below.

As a simple conclusion of Theorem \ref{t1.1}, we have the following conclusion,
the details being omitted here.

\begin{corollary}\label{c1.1}
Let $\Phi$ be a Musielak--Orlicz function satisfying Assumptions \ref{a1} through \ref{a4}.
Assume that the complementary function $\Phi^\ast$ to $\Phi$
satisfies Assumptions \ref{a1} and \ref{a2},
$\{\psi_\epsilon\}_{\epsilon>0}$ is a family of functions
as in Definition \ref{d1.3} and $f\in L^1_\loc(\rn)$.
Then $f\in W^{1,\,\Phi}(\rn)$ if and only if $f\in L^\Phi(\rn)$ and
$$
\limsup_{\epsilon\to0^+}\rho^\epsilon_{\sharp}(f)<\fz.
$$
Moreover, if $f\in W^{1,\,\Phi}(\rn)$, then
\begin{equation*}
\|f\|_{W^{1,\,\Phi}(\rn)}=\|f\|_{L^\Phi(\rn)}
+c_0^{-1}\lim_{\epsilon\to0^+}\|f\|_{\Phi,\,\sharp}^\epsilon,
\end{equation*}
where $c_0$ is the same as in Theorem \ref{hr17thm}.
 \end{corollary}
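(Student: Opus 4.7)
The plan is to derive the corollary directly from Theorem \ref{t1.1} using the definition of the Musielak--Orlicz--Sobolev space. By Definition \ref{d1.2}, for any $f\in L^1_\loc(\rn)$, the condition $f\in W^{1,\,\Phi}(\rn)$ is equivalent to the conjunction of $f\in L^\Phi(\rn)$ and $|\nabla f|\in L^\Phi(\rn)$. Under the assumed hypotheses on $\Phi$ and $\Phi^\ast$, Theorem \ref{t1.1} applies to $f$ and identifies the second of these conditions with $\limsup_{\epsilon\to0^+}\rho^\epsilon_{\sharp}(f)<\fz$. Combining these two equivalences yields the first assertion of the corollary, with no extra work required; in particular, the corollary's hypothesis $f\in L^1_\loc(\rn)$ already matches the hypothesis of Theorem \ref{t1.1}, so no additional integrability argument is needed.

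For the norm identity, I would invoke \eqref{1.3}: whenever $f\in W^{1,\,\Phi}(\rn)$, in particular $f\in L^1_\loc(\rn)$ and $|\nabla f|\in L^\Phi(\rn)$, so Theorem \ref{t1.1} applies and gives
\begin{equation*}
\lim_{\epsilon\to0^+}\|f\|_{\Phi,\,\sharp}^\epsilon=c_0\||\nabla f|\|_{L^\Phi(\rn)}.
\end{equation*}
Substituting this expression for $\||\nabla f|\|_{L^\Phi(\rn)}$ into the definition $\|f\|_{W^{1,\,\Phi}(\rn)}:=\|f\|_{L^\Phi(\rn)}+\||\nabla f|\|_{L^\Phi(\rn)}$ immediately produces
\begin{equation*}
\|f\|_{W^{1,\,\Phi}(\rn)}=\|f\|_{L^\Phi(\rn)}+c_0^{-1}\lim_{\epsilon\to0^+}\|f\|_{\Phi,\,\sharp}^\epsilon,
\end{equation*}
as desired. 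Since both steps are entirely formal once Theorem \ref{t1.1} is in hand, there is no genuine obstacle; the role of the corollary is merely to repackage Theorem \ref{t1.1} in terms of the inhomogeneous Sobolev norm on $W^{1,\,\Phi}(\rn)$ rather than the homogeneous one on $\dot{W}^{1,\,\Phi}(\rn)$, and this is precisely why the author writes that the details are omitted.
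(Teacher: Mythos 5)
Your proposal is correct and is exactly the simple derivation the paper intends: the paper explicitly omits the details of Corollary \ref{c1.1} as "a simple conclusion of Theorem \ref{t1.1}," and your argument---splitting the definition of $W^{1,\,\Phi}(\rn)$ into $f\in L^\Phi(\rn)$ plus $|\nabla f|\in L^\Phi(\rn)$, applying the equivalence and \eqref{1.3} from Theorem \ref{t1.1} under the shared hypothesis $f\in L^1_\loc(\rn)$, and substituting into the norm definition---is precisely that omitted argument. No gaps.
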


 Applying Theorem \ref{t1.1} and Corollary \ref{c1.1}, we can obtain the following conclusions
which are of independent interests.

\begin{corollary}\label{c1.2}
The conclusions of Theorem \ref{t1.1} and Corollary \ref{c1.1} hold true if $\Phi$
satisfies one of the following items:
\begin{itemize}
\item[\rm(i)] for any $x\in\rn$ and $t\in[0,\fz)$,
$\Phi(x,t):=t^{p(x)}$, where $p(\cdot)$ is as in Theorem
\ref{hr17thm}.
\item[\rm(ii)] for any $x\in\rn$ and $t\in[0,\fz)$, $\Phi(x,t):=\fai(t)$,  where $\fai$ is an
Orlicz function satisfying both the $\Delta_2$-condition and that there exist positive constants
$l\in(1,\fz)$ and $t_0\in[0,\fz)$ such that, for any $t\in[t_0,\fz)$,
\begin{equation}\label{1.4}
\fai(lt)\ge2l\fai(t).
\end{equation}
\item[\rm(iii)] for any $x\in\rn$ and $t\in[0,\fz)$, $\Phi(x,t):=\omega(x)t^{p}$,
where $p\in(1,\fz)$ and $\omega\in A_p(\rn)$.
\item[\rm(iv)] for any $x\in\rn$ and $t\in[0,\fz)$, $\Phi(x,t):=t^{p}+\omega(x)t^{q}$,
where $1<p<q<\fz$ and $\omega\in A_q(\rn)$.
\end{itemize}
 \end{corollary}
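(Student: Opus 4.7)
The plan is to verify, case by case, that each of the four families of Musielak--Orlicz functions $\Phi$ in (i)--(iv) satisfies all the hypotheses of Theorem \ref{t1.1} (and hence also those of Corollary \ref{c1.1}), at which point both conclusions follow at once. Concretely, for each $\Phi$ I need to check Assumptions \ref{a1}--\ref{a4} on $\Phi$ itself and Assumptions \ref{a1}--\ref{a2} on the complementary function $\Phi^\ast$.

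For cases (i)--(iii), Assumptions \ref{a3} and \ref{a4} on $\Phi$ are already recorded in Remark \ref{r1.1}(i)--(iii), while Assumptions \ref{a1} and \ref{a2} are routine consequences of the boundedness of $p(\cdot)$, of the explicit power form, or of the local integrability of the weight. What remains in these three cases is to control $\Phi^\ast$. In case (i), a direct computation shows that $\Phi^\ast(x,s)$ is pointwise comparable to $s^{p'(x)}$ with $p'(x):=p(x)/(p(x)-1)$; since $p^->1$ and $p$ is bounded and log-H\"older continuous, so is $p'$, giving Assumptions \ref{a1}--\ref{a2}. In case (iii), the Legendre transform yields $\Phi^\ast(x,s)\sim\omega(x)^{-1/(p-1)}s^{p'}$, and the classical duality $\omega\in A_p(\rn)\Leftrightarrow\omega^{-1/(p-1)}\in A_{p'}(\rn)$ supplies the required local integrability, while the power form trivially gives $\Delta_2$. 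In case (ii), hypothesis \eqref{1.4} is a quantitative form of the $\nabla_2$-condition on $\fai$, which is classically equivalent to $\fai^\ast$ satisfying the $\Delta_2$-condition; the other assumption is trivial since $\Phi^\ast$ has no $x$-dependence.

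The main obstacle is case (iv), $\Phi(x,t):=t^p+\omega(x)t^q$, which is not covered by Remark \ref{r1.1} and must be handled from scratch. Assumptions \ref{a1} and \ref{a2} for $\Phi$ are immediate from the explicit form. For Assumption \ref{a4}, I would observe that the modular $\rho_\Phi$ splits additively, so that, up to norm equivalence, $L^\Phi(\rn)$ coincides with the intersection of $L^p(\rn)$ and the weighted Lebesgue space $L^q(\rn,\omega)$; since $p\in(1,\fz)$ and $\omega\in A_q(\rn)$, the Hardy--Littlewood maximal operator $M$ is bounded on each factor, hence on their intersection. For Assumption \ref{a3}, I plan a standard mollification plus cut-off argument, relying on the fact that $\omega\in A_q(\rn)$ forces both $\omega$ and $\omega^{-1/(q-1)}$ to be locally integrable, which is exactly what is needed to make the approximants converge to the target function simultaneously in the unweighted $L^p$-gradient norm and in the weighted $L^q(\rn,\omega)$-gradient norm.

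Finally, for $\Phi^\ast$ in case (iv), I would use the infimal-convolution representation
\begin{equation*}
\Phi^\ast(x,s)=\inf_{s_1+s_2=s}\bigl\{C_p\,s_1^{p'}+C_q\,\omega(x)^{-1/(q-1)}s_2^{q'}\bigr\}
\end{equation*}
with constants $C_p,\,C_q$ depending only on $p,\,q$; both summands satisfy the $\Delta_2$-condition with constants independent of $x$, and this doubling property is inherited by the infimum, giving Assumption \ref{a2} for $\Phi^\ast$, while local integrability of $\Phi^\ast(\cdot,c)$ on compacta follows from that of $\omega^{-1/(q-1)}$. The technically most delicate step I anticipate is the simultaneous approximation required for Assumption \ref{a3} in case (iv), where one must construct a single sequence in $C^\fz_c(\rn)$ that converges to the same limit in two genuinely different gradient norms at once.
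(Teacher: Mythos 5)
Your case-by-case strategy and the verifications in cases (i)--(iii) match the paper's proof essentially line for line: Remark \ref{r1.1} disposes of Assumptions \ref{a3}--\ref{a4}, the explicit Legendre transforms $\Phi^\ast(x,s)\sim s^{p'(x)}$ and $\Phi^\ast(x,s)\sim\omega(x)^{-1/(p-1)}s^{p'}$ handle the complementary function in (i) and (iii), and \eqref{1.4} together with the classical $\nabla_2\Leftrightarrow\Delta_2$ duality handles (ii). No issues there.

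The divergence is in case (iv), and it contains a real gap. For Assumption \ref{a3} you propose a ``standard mollification plus cut-off argument'' to get density of $C^\infty_c(\rn)$ in $\dot{W}^{1,\Phi}(\rn)$ with $\Phi(x,t)=t^p+\omega(x)t^q$, and you yourself flag it as the most delicate step without carrying it out. This is not a step that can be left at the level of a sketch: density of smooth functions in double-phase Musielak--Orlicz--Sobolev spaces is precisely where Lavrentiev-type obstructions arise, and the assertion is a nontrivial theorem, not a routine mollification. The paper closes this gap by simply invoking \cite[Theorem 1.1]{agsy17} (Ahmida--Gwiazda--Skrzypczak--Youssfi), which establishes exactly this approximation result for the Musielak--Orlicz--Sobolev setting including the double-phase case. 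You should cite that result rather than promise a from-scratch construction. Your treatment of $\Phi^\ast$ in case (iv) via the infimal-convolution formula, on the other hand, is a legitimate alternative to the paper's route (the paper asserts $\Phi^\ast$ is pointwise comparable to the \emph{sum} of the two conjugate powers with uniform constants, which is a stronger statement than needed and not obviously true uniformly in $\omega(x)$): your observation that the $\Delta_2$-constant passes through the infimum by scaling both summands is correct, and local integrability of $\Phi^\ast(\cdot,c)$ is in fact immediate since the inf-convolution is dominated by the $x$-independent term $C_p c^{p'}$, so you do not even need the local integrability of $\omega^{-1/(q-1)}$ for Assumption \ref{a1}.
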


The proof of Corollary \ref{c1.2} is also presented in Section \ref{s2}.

\begin{remark}\label{r1.2}
\begin{itemize}
\item[(i)] We point out that Corollary \ref{c1.2}(i) improves Theorem \ref{hr17thm}
via weakening the assumption $f\in L^1(\mathbb R^n)$
into $f\in L_\loc^1(\mathbb R^n)$, which positively confirms a conjecture proposed
by H\"ast\"o and Ribeiro in \cite[Remark 4.1]{hr17} (see also Remark \ref{hr17}).

\item[(ii)] The condition \eqref{1.4} in Corollary \ref{c1.2}(ii)
is to guarantee that the complementary function $\fai^\ast$ to $\fai$
satisfies the $\Delta_2$-condition.
A typical example of such a $\fai$ is that, for any $t\in[0,\fz)$,
$\fai(t):=t^p(|\log t|+1)$, where $p\in(1,\fz)$ is a positive
constant (see, for example, \cite[p.\,27, (4.13)]{kr61}).

\item[(iii)] We point out that Musielak--Orlicz functions $\Phi$ as in Corollary \ref{c1.2}(iv)
and the corresponding Musielak--Orlicz--Sobolev spaces naturally appear in
the study on the double phase variational problems (see, for example, \cite{agsy17,cm15}).
\end{itemize}
\end{remark}

\begin{remark}\label{r1.3}
Some time later after we have completed the first version of this article,
we learned from Professor Peter H\"ast\"o that a result similar to
Theorem \ref{t1.1} had been independently obtained by Ferreira et al. \cite{fhr16}
for the homogeneous Musielak--Orlicz--Sobolev spaces
$\dot{W}^{1,\,\Phi}(\rn)$ and $\dot{W}^{1,\,\Phi}(\boz)$
under the quite different assumptions for $\Phi$, where $\boz$ is an open set in $\rn$.

More precisely, let $\fai:\ [0,\fz]\to[0,\fz]$ be an increasing function.
Denote by $\fai^{-1}:\ [0,\fz]\to[0,\fz]$ the
\emph{left-continuous generalized inverse} of $\fai$,
namely, for any $s\in[0,\fz]$,
$$\fai^{-1}(s):=\inf\lf\{t\in[0,\fz]:\ \fai(t)\ge s\r\}.
$$
Moreover, a function $g:\ \rr\to[0,\fz]$ is said to be \emph{almost increasing} if
there exists a positive constant $c$ such that, for any $t_1,\,t_2\in\rr$ with $t_1\le t_2$,
$g(t_1)\le cg(t_2)$.

Let the Musielak--Orlicz function $\Phi$ satisfy the following assumptions:
\begin{itemize}
  \item[\rm(a)] $\Phi$ satisfies the $\Delta_2$-condition.
  \item[\rm(b)] There exists a positive constant $\ell\in(1,\fz)$ such that, for almost
every $x\in\rn$, the function $s\mapsto s^{-\ell}\Phi(x,s)$ in $(0,\fz)$ is almost increasing
with the constant $\ell$ independent of $x$.
  \item[\rm(c)] There exist positive constants $\beta\in(0,1)$ and $\gamma\in(0,\fz)$ such that
\begin{itemize}
  \item[\rm(c)$_1$] for any $x\in\rn$, $\Phi(x,\beta\gamma)\le1\le\Phi(x,\gamma)$;
  \item[\rm(c)$_2$] for any ball $B\subset\rn$, any $x,\,y\in B$
and $t\in[\gamma,\Phi^{-1}(y,|B|^{-1})]$, $\Phi(x,\beta t)\le\Phi(y,t)$;
  \item[\rm(c)$_3$] there exists a function $h\in L^1(\rn)\cap L^\fz(\rn)$ satisfying that,
for almost every $x,\,y\in\rn$ and any $t\in[0,\gamma]$,
$$\Phi(x,\beta t)\le\Phi(y,t)+h(x)+h(y).
$$
\end{itemize}
\end{itemize}
For such a Musielak--Orlicz function $\Phi$, the conclusion of Theorem \ref{t1.1} was
established in \cite[Theorem 1.1]{fhr16}. Moreover, replaced $\rn$ by an open set $\boz\subset\rn$,
the conclusion of Theorem \ref{t1.1} was also obtained in
\cite[Theorem 1.1]{fhr16} under the assumptions $\rm(a)$, $\rm(b)$
and $\rm(c)$ with replacing $\rn$ by $\boz$.

We point out that, in the case of the Euclidean space
$\rn$, Theorem \ref{t1.1} completely covers \cite[Theorem 1.1]{fhr16};
in other words, the assumptions in Theorem \ref{t1.1} are \emph{weaker} than
the assumptions of \cite[Theorem 1.1]{fhr16}.
Indeed, by \cite[Theorem 4.6]{h15}, we know that the above assumptions $\rm(b)$
and $\rm(c)$ imply Assumption \ref{a4}. Moreover, from \cite[Theorem 6.5]{hhk16}
and \cite[Theorem 4.6]{h15}, it follows that the above assumptions $\rm(a)$, $\rm(b)$
and $\rm(c)$ imply Assumption \ref{a3}. Furthermore, it is easy to see that
the above assumptions $\rm(a)$ and $\rm(c)_1$ imply Assumption \ref{a1}.
By \cite[Lemma 2.4(2)]{hht17} and \cite[Lemma 2.4]{hhk16}, we conclude that
the above assumptions $\rm(a)$ and $\rm(b)$ imply that
the complementary function $\Phi^\ast$ to $\Phi$
satisfies Assumption \ref{a2}, which, combined with \cite[Proposition 4.6]{hhk16}
and the above assumptions $\rm(a)$ and $\rm(c)_1$, further implies that
the complementary function $\Phi^\ast$ satisfies Assumption \ref{a1}.
Thus, the above assumptions $\rm(a)$, $\rm(b)$
and $\rm(c)$ for $\Phi$ imply that Assumptions \ref{a1} through \ref{a4}
hold true for $\Phi$ and that the complementary function $\Phi^\ast$ to $\Phi$
satisfies Assumptions \ref{a1} and \ref{a2}, which further implies that
Theorem \ref{t1.1} completely covers \cite[Theorem 1.1]{fhr16} in the case of $\rn$.

We point out that (i) and (ii) of Corollary \ref{c1.2} are also simple corollaries
of \cite[Theorem 1.1]{fhr16}. However, the function $\Phi$ as in (iii) and (iv)
of Corollary \ref{c1.2} may not satisfy the assumption (c)$_1$.
Thus, (iii) and (iv) of Corollary \ref{c1.2} cannot be deduced from
\cite[Theorem 1.1]{fhr16}, but they can be deduced from Theorem \ref{t1.1}. In this sense,
Theorem \ref{t1.1} strictly has \emph{more generality} than \cite[Theorem 1.1]{fhr16} 
in the case of the Euclidean space $\rn$.
\end{remark}

Finally, we make some conventions on notation. Throughout the
article, we always denote by $C$ a \emph{positive constant} which is
independent of the main parameters, but it may vary from line to
line. We also use $C_{(\gz,\bz,\ldots)}$ to denote a  \emph{positive
constant} depending on the indicated parameters $\gz,$ $\bz$,
$\ldots$. The \emph{symbol} $f\ls g$ means that $f\le Cg$. If $f\ls
g$ and $g\ls f$, then we write $f\sim g$. We also use the following
convention: If $f\le Cg$ and $g=h$ or $g\le h$, we then write $f\ls g\sim h$
or $f\ls g\ls h$, \emph{rather than} $f\ls g=h$
or $f\ls g\le h$. For any measurable subset $E$ of $\rn$, we denote by $\mathbf{1}_{E}$ its
\emph{characteristic function}. We also let $\nn:=\{1,\, 2,\, \ldots\}$
and use $\vec 0_n$ to denote the origin of $\rn$.

\section{Proofs of Theorem \ref{t1.1} and Corollary \ref{c1.2}\label{s2}}

In this section, we give the proofs of
Theorem \ref{t1.1} and Corollary \ref{c1.2}.
We begin with some auxiliary conclusions. In what follows, we use the \emph{symbol} $C^2_c(\rn)$
to denote the set of all functions having continuous
derivatives till order 2 with compact supports.

\begin{lemma}\label{l2.1}
Assume that the Musielak--Orlicz function $\Phi$ satisfies Assumptions \ref{a1} and \ref{a2}.
If $f\in C^2_c(\rn)$, then
$$\lim_{\epsilon\to0^+}\rho^\epsilon_{\sharp}(f)=\rho_\Phi(c_0|\nabla f|),
$$
where $c_0$ is the same as in Theorem \ref{hr17thm}.
\end{lemma}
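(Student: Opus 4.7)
The natural decomposition is $\rho^\epsilon_\sharp(f) = \int_0^1 F(r)\psi_\epsilon(r)\,dr$ where $F(r) := \int_{\rn} \Phi(x, \frac{1}{r}M^\sharp_{B(x,r)}(f))\,dx$. I would prove the lemma in two stages: first establish that $F(r)\to\rho_\Phi(c_0|\nabla f|)$ as $r\to 0^+$, and then transfer this to the $\epsilon\to 0^+$ limit by exploiting the concentration of $\{\psi_\epsilon\}_{\epsilon>0}$ near the origin.

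For the first stage, the key pointwise calculation is that $\frac{1}{r}M^\sharp_{B(x,r)}(f) \to c_0|\nabla f(x)|$ uniformly in $x$. Since $f\in C^2_c(\rn)$, Taylor's theorem gives, for $y\in B(x,r)$,
$$f(y)-f(x) = \nabla f(x)\cdot(y-x) + O(r^2\|\nabla^2 f\|_{\fz}),$$
and averaging (using that the linear term has zero mean on balls centered at $x$) yields $f_{B(x,r)}=f(x)+O(r^2)$, whence $|f(y)-f_{B(x,r)}|=|\nabla f(x)\cdot(y-x)|+O(r^2)$ uniformly in $x$. Averaging, dividing by $r$, and substituting $y=x+rz$ reduces the leading term to $\frac{1}{|B(\vec{0}_n,1)|}\int_{B(\vec{0}_n,1)}|\nabla f(x)\cdot z|\,dz$, which equals $c_0|\nabla f(x)|$ by rotational invariance of the unit ball. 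Thus $\frac{1}{r}M^\sharp_{B(x,r)}(f)=c_0|\nabla f(x)|+O(r)$ uniformly.

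To pass this through the $x$-integral against $\Phi(x,\cdot)$, I would set up dominated convergence as follows. Let $K:=\supp f$ and $K':=\{x\in\rn:\dist(x,K)\le1\}$; for $r\in(0,1]$, $M^\sharp_{B(x,r)}(f)$ vanishes outside $K'$. The Poincar\'e-type bound $|f(y)-f_{B(x,r)}|\le 2r\|\nabla f\|_{\fz}$ for $y\in B(x,r)$ gives $\frac{1}{r}M^\sharp_{B(x,r)}(f)\le 2\|\nabla f\|_{\fz}$ uniformly, so by monotonicity of $\Phi(x,\cdot)$,
$$\Phi\lf(x,\frac{1}{r}M^\sharp_{B(x,r)}(f)\r)\le\Phi(x,2\|\nabla f\|_{\fz})\mathbf{1}_{K'}(x),$$
and the right-hand side is integrable by Assumption \ref{a1}. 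Continuity of $\Phi(x,\cdot)$ together with the dominated convergence theorem then yields $\lim_{r\to 0^+}F(r)=\rho_\Phi(c_0|\nabla f|)$, and the same dominant supplies a uniform upper bound $F(r)\le M_0$ on $(0,1]$.

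For the second stage, given $\eta>0$ I would choose $\gamma\in(0,1)$ so small that $|F(r)-\rho_\Phi(c_0|\nabla f|)|<\eta$ for all $r\in(0,\gamma]$, and split
$$\lf|\rho^\epsilon_\sharp(f)-\rho_\Phi(c_0|\nabla f|)\r|\le\int_0^\gamma\lf|F(r)-\rho_\Phi(c_0|\nabla f|)\r|\psi_\epsilon(r)\,dr+\int_\gamma^1\lf|F(r)-\rho_\Phi(c_0|\nabla f|)\r|\psi_\epsilon(r)\,dr.$$
Since $\psi_\epsilon\ge0$ and $\int_0^1\psi_\epsilon=1$, the first piece is at most $\eta$; the second is bounded by $(M_0+\rho_\Phi(c_0|\nabla f|))\int_\gamma^1\psi_\epsilon(r)\,dr$, which vanishes as $\epsilon\to0^+$ by the defining property of $\{\psi_\epsilon\}_{\epsilon>0}$. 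Arbitrariness of $\eta$ gives the claim.

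The only genuinely delicate point is securing a \emph{uniform-in-}$(x,r)$ integrable dominant for the $x$-integral, for which Assumption \ref{a1} is essential through the local integrability of $\Phi(\cdot,2\|\nabla f\|_\fz)$ on the fixed compact set $K'$; the $\Delta_2$-condition of Assumption \ref{a2} does not appear to be needed for this particular reduction and is presumably kept as a standing hypothesis for consistency with the broader theorem.
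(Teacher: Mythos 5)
Your argument is correct, and it reaches the same leading-order identification as the paper (Taylor expansion, zero mean of the linear term over the ball, rotational invariance producing $c_0|\nabla f(x)|$, and the crude bound $\frac1r M^\sharp_{B(x,r)}(f)\le 2\|\nabla f\|_\infty$ with support in a fixed compact set so that Assumption \ref{a1} applies), but it processes the error term differently. The paper works directly with $\rho^\epsilon_\sharp(f)$: it sandwiches $M^\sharp_{B(x,r)}(f)$ between $c_0r|\nabla f(x)|\pm h(x,r)r$ and then uses the $\Delta_2$-based splitting inequality of Lemma \ref{l2.2}(iii), $\Phi(x,a+b)\le(1+\delta)^\gamma\Phi(x,a)+C_{(\delta)}\Phi(x,b)$, to separate a main modular from an error modular, kills the error via Assumption \ref{a1} and the concentration of $\psi_\epsilon$, and finally lets $\delta\to0^+$. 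You instead insert an intermediate limit: uniform convergence of $\frac1r M^\sharp_{B(x,r)}(f)$ to $c_0|\nabla f(x)|$ plus continuity and monotonicity of $\Phi(x,\cdot)$ and dominated convergence (dominant $\Phi(\cdot,2\|\nabla f\|_\infty)\mathbf 1_{K'}$) give $F(r)\to\rho_\Phi(c_0|\nabla f|)$ as $r\to0^+$, and then the $\gamma$-splitting of $\int_0^1 F(r)\psi_\epsilon(r)\,dr$ transfers this to the $\epsilon\to0^+$ limit. What your route buys is exactly what you note at the end: Lemma \ref{l2.2} is never invoked, so the $\Delta_2$-condition (Assumption \ref{a2}) is not needed for the smooth compactly supported case, whereas the paper's proof of this lemma does use it; conversely, the paper's quasi-additivity device is the one that survives when one later approximates general $f\in\dot W^{1,\Phi}(\rn)$ (where no uniform pointwise control or compact support is available), which is presumably why the authors phrase Lemma \ref{l2.1} in that language from the start. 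One cosmetic caution: your dominated-convergence step should be read along arbitrary sequences $r_k\downarrow0$ (standard), and the measurability in $x$ of $\Phi(x,\frac1rM^\sharp_{B(x,r)}(f))$ is the usual Carath\'eodory-type fact already implicit in the definition of $\rho^\epsilon_\sharp$; neither affects correctness.
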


To prove Lemma \ref{l2.1}, we need the following properties of Musielak--Orlicz functions.

\begin{lemma}\label{l2.2}
Assume that the Musielak--Orlicz function $\Phi$ satisfies Assumption \ref{a2}.
\begin{itemize}
\item[\rm (i)] For almost every $x\in\rn$, any $a\in(0,\fz)$
and any $b\in(0,1)$, $\Phi(x,ab)\le b\Phi(x,a)$.
\item[\rm (ii)] There exists a positive constant $\gamma\in(\kappa,\fz)$
such that, for almost every $x\in\rn$, any $a\in(0,\fz)$ and any $b\in[1,\fz)$,
$\Phi(x,ab)\le b^\gamma\Phi(x,a)$,
where $\kappa$ is as in Assumption \ref{a2}.
\item[\rm (iii)] For any $\delta\in(0,\fz)$, there exists a positive constant $C_{(\delta)}$,
depending on $\delta$, such that, for almost every $x\in\rn$ and any $a,\,b\in(0,\fz)$,
$$\Phi(x,a+b)\le (1+\delta)^\gamma\Phi(x,a)+C_{(\delta)}\Phi(x,b),
$$
where $\gamma$ is as in Lemma \ref{l2.2}(ii).
\end{itemize}
\end{lemma}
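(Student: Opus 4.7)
The plan is to prove the three items of Lemma \ref{l2.2} in order, using convexity of $\Phi(x, \cdot)$ for (i), an interplay between convexity and the $\Delta_2$-condition for (ii), and (ii) together with a size dichotomy for (iii). For (i), since $\Phi(x, \cdot)$ is convex with $\Phi(x, 0) = 0$, I would write $ab = b \cdot a + (1 - b) \cdot 0$ for $b \in (0, 1)$ and apply convexity to obtain $\Phi(x, ab) \le b \Phi(x, a) + (1 - b) \Phi(x, 0) = b \Phi(x, a)$.

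For (ii), the main technical step, I would first note that convexity with $\Phi(x, 0) = 0$ forces $\Phi(x, 2t) \ge 2 \Phi(x, t)$, so Assumption \ref{a2} implies $\kappa \ge 2$. I would then split into $b \in [1, 2]$ and $b > 2$. For $b \in [1, 2]$, the decomposition $ab = (b - 1) \cdot 2a + (2 - b) \cdot a$ and convexity, together with Assumption \ref{a2}, give
\begin{equation*}
\Phi(x, ab) \le (b - 1) \Phi(x, 2a) + (2 - b) \Phi(x, a) \le [(\kappa - 1)(b - 1) + 1] \Phi(x, a).
\end{equation*}
The tangent-line bound $(\kappa - 1)(b - 1) + 1 \le b^{\kappa - 1}$ for $b \ge 1$ (valid because $\kappa - 1 \ge 1$ makes $b \mapsto b^{\kappa - 1}$ convex, so its graph lies above its tangent at $b = 1$) then yields $\Phi(x, ab) \le b^{\kappa - 1} \Phi(x, a)$ on $[1, 2]$. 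For $b > 2$, I would write $b = 2^n b_0$ with $n \in \nn$ and $b_0 \in [1, 2)$, iterate Assumption \ref{a2} to get $\Phi(x, 2^n a) \le \kappa^n \Phi(x, a)$, apply the $[1, 2]$-case to the pair $(2^n a, b_0)$, and combine using the specialization $\kappa \le 2^{\kappa - 1}$ (the $b = 2$ instance of the same tangent-line bound) to conclude $\Phi(x, ab) \le (2^n b_0)^{\kappa - 1} \Phi(x, a) = b^{\kappa - 1} \Phi(x, a)$. Since $\kappa - 1 < \kappa$, any $\gamma > \kappa$ works. I expect the case $b \in [1, 2]$ to be the main obstacle: a crude iteration of Assumption \ref{a2} leaves an extra multiplicative constant in front of $b^\gamma$ that cannot be absorbed as $b \to 1^+$, and the convexity-based linear bound combined with the tangent-line comparison is precisely what removes this extra constant.

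For (iii), I would derive it from (ii) by a size dichotomy. Given $\delta \in (0, \fz)$, if $b \le \delta a$, then $a + b \le (1 + \delta) a$, so monotonicity of $\Phi(x, \cdot)$ and (ii) applied with scaling factor $1 + \delta \ge 1$ give $\Phi(x, a + b) \le (1 + \delta)^\gamma \Phi(x, a)$; if instead $b > \delta a$, then $a + b < (1 + 1/\delta) b$, and (ii) with scaling factor $1 + 1/\delta \ge 1$ gives $\Phi(x, a + b) \le (1 + 1/\delta)^\gamma \Phi(x, b)$. Setting $C_{(\delta)} := (1 + 1/\delta)^\gamma$ and adding the two bounds covers both cases and yields the claim.
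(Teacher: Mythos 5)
Your proof is correct, and part (ii) is where it genuinely diverges from the paper. The paper handles (ii) by citing the proof of a known lemma (Harjulehto--H\"ast\"o--Toivanen, Lemma 2.4(2)) which asserts that the $\Delta_2$-condition yields some $\gamma\in(\kappa,\fz)$ for which $t\mapsto t^{-\gamma}\Phi(x,t)$ is decreasing, and then simply unwinds that monotonicity to get $\Phi(x,ab)\le b^\gamma\Phi(x,a)$. You instead give a self-contained argument: you first extract $\kappa\ge2$ from convexity, then on $b\in[1,2]$ you interpolate $ab$ between $a$ and $2a$ by convexity plus $\Delta_2$ to get the linear bound $[(\kappa-1)(b-1)+1]\Phi(x,a)$, dominate it by $b^{\kappa-1}\Phi(x,a)$ via the tangent-line inequality, and then bootstrap to $b>2$ by writing $b=2^n b_0$ and iterating $\Delta_2$ together with the sub-case $\kappa\le 2^{\kappa-1}$. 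This is a clean, elementary derivation that even produces the sharper exponent $\kappa-1$ (which is then weakened to any $\gamma>\kappa$ for consistency with the statement). For (iii), both you and the paper use the same size dichotomy $b\lessgtr\delta a$; the only cosmetic difference is that on $b>\delta a$ the paper re-iterates $\Delta_2$ with a hand-chosen $m$ to get $C_{(\delta)}=\kappa^m$, whereas you simply invoke (ii) to get $C_{(\delta)}=(1+1/\delta)^\gamma$, which is tidier. Part (i) is identical to the paper. One very small stylistic point: at the end of (iii) it is cleaner to say that in each of the two cases the claimed right-hand side dominates, using that both of its terms are non-negative, rather than to speak of ``adding the two bounds''; the conclusion is the same.
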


\begin{proof}
We first show (i). By the fact that, for almost every $x\in\rn$,
$\Phi(x,\cdot)$ is convex and $\Phi(x,0)=0$, we find that,
for almost every $x\in\rn$, any $a\in(0,\fz)$ and any $b\in(0,1)$,
$\Phi(x,ab)\le b\Phi(x,a)$, which implies that (i) holds true.

Now we give the proof of (ii). From the proof of \cite[Lemma 2.4(2)]{hht17},
it follows that there exists a positive constant $\gamma\in(\kappa,\fz)$
such that, for almost every $x\in\rn$, the function $t\mapsto t^{-\gamma}\Phi(x,t)$
is decreasing in $(0,\fz)$,
which further implies that, for almost every $x\in\rn$,
any $a\in(0,\fz)$ and any $b\in[1,\fz)$,
$(ab)^{-\gamma}\Phi(x,ab)\le a^{-\gamma}\Phi(x,a)$.
By this, we find that, for  almost every
$x\in\rn$, any $a\in(0,\fz)$ and any $b\in[1,\fz)$,
$$\Phi(x,ab)\le b^\gamma\Phi(x,a).$$

Finally, we prove (iii). Let $\delta\in(0,\fz)$. Fix $x\in\rn$ and $a,b\in[0,\fz)$.
If $b>\delta a$, from the fact that $\Phi(x,\cdot)$ is increasing and Assumption \ref{a2},
we deduce that
\begin{equation}\label{2.2}
\Phi(x,a+b)\le\Phi\lf(x,b\lf(1+\frac{1}{\delta}\r)\r)\le\Phi\lf(x,2^mb\r)\le\kappa^m\Phi(x,b),
\end{equation}
where $\kappa$ is as in Assumption \ref{a2} and $m:=m(\delta)\in\nn$ satisfies that $1+\frac{1}{\delta}\le2^m$.
Moreover, if $b\le\delta a$, by (ii), we conclude that
$$\Phi(x,a+b)\le\Phi(x, a(1+\delta))\le(1+\delta)^{\gamma}\Phi(x,a),
$$
which, combined with \eqref{2.2}, then completes the proof of (iii) and hence of Lemma \ref{l2.2}.
\end{proof}

Now we show Lemma \ref{l2.1} by using Lemma \ref{l2.2}.

\begin{proof}[Proof of Lemma \ref{l2.1}]
Let $f\in C^2_c(\rn)$. Then, by the Taylor expansion, we know that, for any $x,\,y\in\rn$,
$$f(y)=f(x)+\nabla f(x)\cdot(y-x)+R(x,y),
$$
where $R(x,y)=o(|x-y|)$ as $y\to x$, which implies that, for any $x,\,y\in\rn$ and $r\in(0,\fz)$,
\begin{align}\label{2.3}
&f(y)-f_{B(x,r)}\\
&\hs=\frac{1}{|B(x,r)|}\int_{B(x,r)}[f(y)-f(z)]\,dz\nonumber\\
&\hs=\frac{1}{|B(x,r)|}\int_{B(x,r)}[\nabla f(x)\cdot(y-x)-\nabla f(x)\cdot(z-x)+R(x,y)-R(x,z)]\,dz.\nonumber
\end{align}
From symmetry, it follows that
$$\int_{B(x,r)}\nabla f(x)\cdot(z-x)\,dz=0,
$$
which, together with \eqref{2.3}, further implies that, for any $x,\,y\in\rn$ and $r\in(0,\fz)$,
\begin{align}\label{2.4}
f(y)-f_{B(x,r)}=\nabla f(x)\cdot(y-x)+R(x,y)-\frac{1}{|B(x,r)|}\int_{B(x,r)}R(x,z)\,dz.
\end{align}
By \eqref{2.4} and the triangle inequality on $\cc$, we conclude that,
for any $x,\,y\in\rn$ and $r\in(0,\fz)$,
\begin{align}\label{2.5}
M^\sharp_{B(x,r)}(f)&\le\frac{1}{|B(x,r)|}\int_{B(x,r)}|\nabla f(x)\cdot(y-x)|\,dy
+\frac{2}{|B(x,r)|}\int_{B(x,r)}|R(x,y)|\,dy\\
&=\frac{|\nabla f(x)|}{|B(x,r)|}\int_{B(x,r)}
\lf|\frac{\nabla f(x)}{|\nabla f(x)|}\cdot(y-x)\r|\,dy
+\frac{2}{|B(x,r)|}\int_{B(x,r)}|R(x,y)|\,dy.\nonumber
\end{align}
Noticing that $\int_{B(x,r)}|\nu\cdot(y-x)|\,dy$ is
independent of $\nu\in S^{n-1}$, from \eqref{2.5},
we further deduce that, for any $x,\,y\in\rn$ and $r\in(0,\fz)$,
\begin{align}\label{2.6}
M^\sharp_{B(x,r)}(f)\le c_0 r|\nabla f(x)|+\frac{2}{|B(x,r)|}\int_{B(x,r)}|R(x,y)|\,dy,
\end{align}
where $c_0$ is the same as in Theorem \ref{hr17thm}. Similarly to \eqref{2.6}, we also have
\begin{align}\label{2.7}
M^\sharp_{B(x,r)}(f)\ge c_0 r|\nabla f(x)|-\frac{2}{|B(x,r)|}\int_{B(x,r)}|R(x,y)|\,dy.
\end{align}
Let $\boz:=\{x\in\rn:\ f|_{B(x,1)} \not\equiv 0\}$. Then
$$\rho^{\epsilon}_{\sharp}(f)=\int_0^1\lf[\int_{\boz}\Phi\lf(x,\frac{1}{r}
M^{\sharp}_{B(x,r)}(f)\r)\,dx\r]\psi_{\epsilon}(r)\,dr.
$$
By $f\in C^2_c(\rn)$, we know that $\boz$ is a bounded set in $\rn$.
From \eqref{2.6}, \eqref{2.7} and the fact that $M^{\sharp}_{B(x,r)}(f)\ge0$, it follows that
\begin{align}\label{2.8}
&\int_0^1\lf\{\int_\boz\Phi(x,\max\{0,c_0|\nabla f(x)|-h(x,r)\})\,dx\r\}\psi_{\epsilon}(r)\,dr\\
&\hs\le\rho^{\epsilon}_{\sharp}(f)
\le\int_0^1\lf\{\int_\boz\Phi(x,c_0|\nabla f(x)|+h(x,r))\,dx\r\}\psi_{\epsilon}(r)\,dr,\nonumber
\end{align}
where $h(x,r):=\frac{2}{r}\frac{1}{|B(x,r)|}\int_{B(x,r)}|R(x,y)|\,dy$.

By Lemma \ref{l2.2}(iii), we conclude that, for any $\delta\in(0,\fz)$,
there exists a positive constant
$C_{(\delta)}$, depending on $\delta$, such that, for almost every $x\in\rn$,
$$\Phi(x,c_0|\nabla f(x)|+h(x,r))\le(1+\delta)^{\gamma}\Phi(x,c_0|\nabla f(x)|)+C_{(\delta)}\Phi(x,h(x,r)),
$$
which, together with $\int_0^1\psi_\epsilon(r)\,dr=1$, further implies that
\begin{align}\label{2.9}
\rho^{\epsilon}_{\sharp}(f)
\le(1+\delta)^{\gamma}\int_\boz\Phi(x,c_0|\nabla f(x)|)\,dx+C_{(\delta)}
\int_0^1\lf[\int_\boz\Phi(x,h(x,r))\,dx\r]\psi_{\epsilon}(r)\,dr.
\end{align}
Letting $\epsilon\to0^+$ in \eqref{2.9}, we find that
\begin{align}\label{2.10}
\limsup_{\epsilon\to0^+}\rho^{\epsilon}_{\sharp}(f)
\le(1+\delta)^{\gamma}\rho_\Phi(c_0|\nabla f|)+\limsup_{\epsilon\to0^+}C_{(\delta)}
\int_0^1\lf[\int_\boz\Phi(x,h(x,r))\,dx\r]\psi_{\epsilon}(r)\,dr.
\end{align}
Assume that $\epsilon\in(0,1/2)$.
Let $\sigma\in(0,\fz)$ be such that $\frac{|R(x,y)|}{|x-y|}<\epsilon$ when $|x-y|<\sigma$.
By $f\in C^2_c(\rn)$, we know that $\frac{|R(x,y)|}{|x-y|}$ is bounded,
which further implies that
there exists a positive constant $C$ such that
\begin{align*}
h(x,r)&=\frac{2}{r}\frac{1}{|B(x,r)|}\int_{B(x,r)}|R(x,y)|\,dy\\
&\le\frac{2}{|B(x,r)|}\int_{B(x,r)}\frac{|R(x,y)|}{|x-y|}\,dy
\le2\epsilon\mathbf{1}_{(0,\sigma)}(r)+C\mathbf{1}_{(\sigma,1)}(r).
\end{align*}
From this, the definition of $\psi_\epsilon$ and Lemma \ref{l2.2}(i), we deduce that
\begin{align*}
&\int_0^1\lf[\int_\boz\Phi(x,h(x,r))\,dx\r]\psi_{\epsilon}(r)\,dr\\
&\hs\le\int_\boz\lf[\int_0^\sigma\Phi(x,2\epsilon)\psi_{\epsilon}(r)\,dr
+\int_\sigma^1\Phi(x,C)\psi_{\epsilon}(r)\,dr\r]\,dx\\
&\hs\le2\epsilon\int_\boz\Phi(x,1)\,dx+\int_\boz\Phi(x,C)\,dx\int_\sigma^1\psi_{\epsilon}(r)\,dr,
\end{align*}
which, combined with Assumption \ref{a1} and the definition of $\psi_\epsilon$, further implies that
\begin{align}\label{2.11}
\lim_{\epsilon\to0^+}\int_0^1\lf[\int_\boz\Phi(x,h(x,r))\,dx\r]\psi_{\epsilon}(r)\,dr=0.
\end{align}
By this and \eqref{2.10}, we conclude that
\begin{align}\label{2.12}
\limsup_{\epsilon\to0^+}\rho^{\epsilon}_{\sharp}(f)
\le(1+\delta)^{\gamma}\rho_\Phi(c_0|\nabla f|).
\end{align}
Letting $\delta\to0^+$ in \eqref{2.12}, then we know that
\begin{align}\label{2.13}
\limsup_{\epsilon\to0^+}\rho^{\epsilon}_{\sharp}(f)
\le\rho_\Phi(c_0|\nabla f|).
\end{align}
Moreover, from Lemma \ref{l2.2}(iii), we deduce that, for any $\delta\in(0,\fz)$,
there exists a positive constant $C_{(\delta)}$, depending on $\delta$, such that,
for almost every $x\in\rn$ and any $r\in(0,\fz)$,
\begin{align*}
&\Phi(x,\max\{0,c_0|\nabla f(x)|-h(x,r)\})\\
&\hs\ge(1+\delta)^{-\gamma}\Phi(x,c_0|\nabla f(x)|)
-C_{(\delta)}(1+\delta)^{-\gamma}\Phi(x,h(x,r)),
\end{align*}
which, together with \eqref{2.8}, implies that
\begin{align*}
&\liminf_{\epsilon\to0^+}\rho^{\epsilon}_{\sharp}(f)\\
&\hs\ge(1+\delta)^{-\gamma}\rho_\Phi(c_0|\nabla f|)
-\limsup_{\epsilon\to0^+}C_{(\delta)}(1+\delta)^{-\gamma}
\int_0^1\lf[\int_\boz\Phi(x,h(x,r))\,dx\r]\psi_{\epsilon}(r)\,dr.
\end{align*}
By this and \eqref{2.11}, similarly to \eqref{2.13}, we conclude that
\begin{align*}
\liminf_{\epsilon\to0^+}\rho^{\epsilon}_{\sharp}(f)
\ge\rho_\Phi(c_0|\nabla f|),
\end{align*}
which, combined with \eqref{2.13}, further implies that
$$\lim_{\epsilon\to0^+}\rho^{\epsilon}_{\sharp}(f)
=\rho_\Phi(c_0|\nabla f|).
$$
This finishes the proof of Lemma \ref{l2.1}.
\end{proof}

\begin{lemma}\label{l2.3}
Assume that the Musielak--Orlicz function $\Phi$ satisfies Assumptions \ref{a2} and \ref{a4}.
Then there exists a positive constant $C$ such that, for any $\epsilon\in(0,\fz)$ and
$f\in \dot{W}^{1,\,\Phi}(\rn)$,
$$\rho^\epsilon_{\sharp}(f)\le C\max\lf\{\||\nabla f|\|_{L^\Phi(\rn)},\,
\||\nabla f|\|_{L^\Phi(\rn)}^\gamma\r\},
$$
where $\gamma$ is as in Lemma \ref{l2.2}(ii).
\end{lemma}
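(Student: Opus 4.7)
The plan is to reduce $\rho^{\epsilon}_{\sharp}(f)$ to an $L^\Phi$-modular of the Hardy--Littlewood maximal function $M(|\nabla f|)$ via the classical $(1,1)$-Poincar\'e inequality on balls, and then to replace $M(|\nabla f|)$ by $|\nabla f|$ itself using Assumption \ref{a4} together with the quasi-homogeneity of $\Phi$ supplied by Lemma \ref{l2.2}. Since the claimed bound is independent of $\epsilon$, no information on $\psi_\epsilon$ beyond $\int_0^1\psi_\epsilon(r)\,dr=1$ will enter.

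First, for any $f\in\dot{W}^{1,\Phi}(\rn)$, the standard $(1,1)$-Poincar\'e inequality on balls yields a dimensional constant $C_1\in(0,\fz)$ such that, for any $x\in\rn$ and $r\in(0,\fz)$,
$$M^{\sharp}_{B(x,r)}(f)\le C_1\,r\,\frac{1}{|B(x,r)|}\int_{B(x,r)}|\nabla f(y)|\,dy\le C_1\,r\,M(|\nabla f|)(x),$$
where the last inequality uses $x\in B(x,r)$ and the definition of $M$. Dividing through by $r$ and invoking the monotonicity of $\Phi(x,\cdot)$ yields the pointwise bound
$$\Phi\lf(x,\frac{1}{r}M^{\sharp}_{B(x,r)}(f)\r)\le\Phi\lf(x,C_1\,M(|\nabla f|)(x)\r),$$
whose right-hand side no longer depends on $r$.

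Now set $\lz:=\||\nabla f|\|_{L^\Phi(\rn)}$. By Assumption \ref{a4} there exists $C_2\in(0,\fz)$ with $\|M(|\nabla f|)\|_{L^\Phi(\rn)}\le C_2\lz$, so the Luxemburg norm definition \eqref{eq-Phi} (together with the continuity of $\Phi(x,\cdot)$, which forces the defining infimum to be attained) gives
$$\int_{\rn}\Phi\lf(x,\frac{M(|\nabla f|)(x)}{C_2\lz}\r)\,dx\le 1.$$
Writing $C_1 M(|\nabla f|)(x)=(C_1 C_2\lz)\cdot\frac{M(|\nabla f|)(x)}{C_2\lz}$ and separating the cases $C_1 C_2\lz\ge 1$ (in which Lemma \ref{l2.2}(ii) applies with scaling factor $(C_1 C_2\lz)^\gamma$) and $C_1 C_2\lz<1$ (in which Lemma \ref{l2.2}(i) applies with scaling factor $C_1 C_2\lz$), then integrating in $x$, I obtain
$$\int_{\rn}\Phi(x,C_1 M(|\nabla f|)(x))\,dx\le\max\{C_1 C_2\lz,(C_1 C_2\lz)^\gamma\}\le C\max\{\lz,\lz^\gamma\}.$$
Substituting this estimate into the definition of $\rho^{\epsilon}_{\sharp}(f)$ and using $\int_0^1\psi_\epsilon(r)\,dr=1$ yields the claimed inequality with a constant $C$ independent of $\epsilon$ and $f$. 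The only point that requires care is the above case split, necessitated by the fact that Assumption \ref{a2} provides merely quasi-homogeneous scaling for $\Phi(x,\cdot)$ rather than a single power; this is precisely what the two parts of Lemma \ref{l2.2} are designed to handle.
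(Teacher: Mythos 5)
Your proof is correct and follows essentially the same approach as the paper's: reduce $M^{\sharp}_{B(x,r)}(f)$ to the Hardy--Littlewood maximal function of $|\nabla f|$ via the $(1,1)$-Poincar\'e inequality, kill the $r$-integral with $\int_0^1\psi_\epsilon=1$, and then pass from the modular to the Luxemburg norm using Assumption \ref{a4} and the quasi-homogeneity furnished by Lemma \ref{l2.2}(i)/(ii). The only organizational difference is that the paper first handles the normalized case $\||\nabla f|\|_{L^\Phi(\rn)}\le1$ and then rescales via the inequality $\rho^\epsilon_\sharp(f)\le\rho^\epsilon_\sharp(f/\lambda)\max\{\lambda,\lambda^\gamma\}$, whereas you carry the norm $\lambda=\||\nabla f|\|_{L^\Phi(\rn)}$ through directly and split on $C_1C_2\lambda\gtrless 1$; these are equivalent and neither is materially shorter. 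One minor remark: the fact that $\|g\|_{L^\Phi(\rn)}\le\mu$ implies $\int_\rn\Phi(x,|g(x)|/\mu)\,dx\le1$ is better justified by monotone convergence in $\lambda\downarrow\mu$ than by saying the infimum is ``attained by continuity''; the conclusion is correct, but that phrasing slightly misstates the mechanism.
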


\begin{proof}
First, let $f\in\dot{W}^{1,\,\Phi}(\rn)$ be such that $\||\nabla f|\|_{L^\Phi(\rn)}\le1$.
By the Poincar\'e inequality, we know that, for any $x\in\rn$ and $r\in(0,\fz)$,
$$M^{\sharp}_{B(x,r)}(f)=\frac{1}{|B(x,r)|}\int_{B(x,r)}|f(y)-f_{B(x,r)}|\,dy\ls
\frac{r}{|B(x,r)|}\int_{B(x,r)}|\nabla f(y)|\,dy,
$$
which, together with the fact that,
for any $\epsilon\in(0,\fz)$, $\int_0^1\psi_\epsilon(r)\,dr=1$, further implies that
\begin{align}\label{2.14}
\rho^{\epsilon}_{\sharp}(f)
&\ls\int_0^1\lf[\int_{\rn}\Phi\lf(x,\frac{1}{|B(x,r)|}\int_{B(x,r)}
|\nabla f(y)|\,dy\r)\,dx\r]\psi_{\epsilon}(r)\,dr\\
&\ls\int_0^1\lf[\int_{\rn}\Phi\lf(x,M(|\nabla f|)(x)\r)\,dx\r]\psi_{\epsilon}(r)\,dr\nonumber\\
&\sim\int_{\rn}\Phi\lf(x,M(|\nabla f|)(x)\r)\,dx.\nonumber
\end{align}
Moreover, from (i) and (ii) of Lemma \ref{l2.2}, Assumption \ref{a4}
and $\||\nabla f|\|_{L^\Phi(\rn)}\le1$,
we deduce that
\begin{align*}
\int_{\rn}\Phi\lf(x,M(|\nabla f|)(x)\r)\,dx&\ls\max\lf\{\|M(|\nabla f|)\|_{L^\Phi(\rn)},\,
\|M(|\nabla f|)\|_{L^\Phi(\rn)}^\gamma\r\}\\
&\ls\max\lf\{\||\nabla f|\|_{L^\Phi(\rn)},\,
\||\nabla f|\|_{L^\Phi(\rn)}^\gamma\r\}\ls1,
\end{align*}
which, combined with \eqref{2.14}, implies that
\begin{align}\label{2.15}
\rho^{\epsilon}_{\sharp}(f)\ls\int_{\rn}\Phi\lf(x,M(|\nabla f|)(x)\r)\,dx\ls1.
\end{align}

For any $f\in\dot{W}^{1,\,\Phi}(\rn)$, replacing $f$ by $f/\||\nabla f|\|_{L^\Phi(\rn)}$ and repeating
the proof of \eqref{2.15}, we know that
\begin{align}\label{2.16}
\rho^{\epsilon}_{\sharp}\lf(\frac{f}{\||\nabla f|\|_{L^\Phi(\rn)}}\r)
\ls\int_{\rn}\Phi\lf(x,\frac{M(|\nabla f|)(x)}{\||\nabla f|\|_{L^\Phi(\rn)}}\r)\,dx\ls1.
\end{align}
Moreover, from the definition of $\rho^{\epsilon}_{\sharp}$ and (i) and (ii) of Lemma \ref{l2.2},
it follows that, for any $\lz\in(0,\fz)$,
$$\rho^{\epsilon}_{\sharp}(f)\le\rho^{\epsilon}_{\sharp}
\lf(\frac{f}{\lz}\r)\max\{\lz,\,\lz^\gamma\},
$$
where $\gamma$ is as in Lemma \ref{l2.2}(ii),
which, combined with \eqref{2.16}, further implies that
$$\rho^\epsilon_{\sharp}(f)\ls\max\lf\{\||\nabla f|\|_{L^\Phi(\rn)},\,
\||\nabla f|\|_{L^\Phi(\rn)}^\gamma\r\}.
$$
This finishes the proof of Lemma \ref{l2.3}.
\end{proof}

To prove Theorem \ref{t1.1}, we need the following reflexivity
of the Musielak--Orlicz space $L^\Phi(\rn)$,
which was obtained in \cite[Theorem 1.4]{ya17}.

\begin{lemma}\label{l2.4}
Let $\Phi$ be a Musielak--Orlicz function and $\Phi^\ast$ be the complementary function
to $\Phi$. If both $\Phi$ and $\Phi^\ast$ satisfy Assumptions \ref{a1} and \ref{a2},
then the Musielak--Orlicz space $L^\Phi(\rn)$ is reflexive, namely,
$(L^\Phi(\rn))^{\ast\ast}=L^\Phi(\rn)$, where $(L^\Phi(\rn))^{\ast}$ denotes the dual space
of $L^\Phi(\rn)$ [namely, the space of all continuous linear functions on $L^\Phi(\rn)$] and
$$\lf(L^\Phi(\rn)\r)^{\ast\ast}=\lf(\lf(L^\Phi(\rn)\r)^{\ast}\r)^{\ast}.$$
\end{lemma}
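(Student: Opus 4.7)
The plan is to establish reflexivity by identifying the dual space $(L^\Phi(\rn))^\ast$ with $L^{\Phi^\ast}(\rn)$, and then iterating this identification with $\Phi$ replaced by $\Phi^\ast$. Since $\Phi(x,\cdot)$ is convex, lower semicontinuous and proper in the growth variable for a.e.\ $x$, the biconjugate satisfies $(\Phi^\ast)^\ast=\Phi$ pointwise a.e., so a single duality result applied twice will yield $(L^\Phi(\rn))^{\ast\ast}=L^\Phi(\rn)$. The hypotheses are symmetric in $\Phi$ and $\Phi^\ast$, which is exactly what makes this iteration legal.

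First I would record the pointwise Young inequality $st\le\Phi(x,t)+\Phi^\ast(x,s)$, which follows from the very definition of the complementary function. Combined with Assumption \ref{a1} for both $\Phi$ and $\Phi^\ast$ (guaranteeing that compactly supported bounded functions lie in both spaces and that pairings are well-defined), this yields a H\"older-type inequality $|\int_\rn fg\,dx|\le 2\|f\|_{L^\Phi(\rn)}\|g\|_{L^{\Phi^\ast}(\rn)}$. Hence every $g\in L^{\Phi^\ast}(\rn)$ induces a continuous functional $L_g$ on $L^\Phi(\rn)$. For surjectivity of $g\mapsto L_g$, I would use Assumption \ref{a2} (the $\Delta_2$-condition for $\Phi$) to show that simple functions supported on sets of finite measure are dense in $L^\Phi(\rn)$; this is the standard consequence of $\Delta_2$ that modular convergence and norm convergence coincide, which is already built into the growth estimates of Lemma \ref{l2.2}(i)--(ii). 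Then, given a continuous linear functional $L$ on $L^\Phi(\rn)$, the set function $\nu(E):=L(\mathbf{1}_E)$, defined on measurable sets of finite measure (these sit in $L^\Phi(\rn)$ by Assumption \ref{a1}), is $\sigma$-additive and absolutely continuous with respect to Lebesgue measure by continuity of $L$ together with $\Delta_2$. Radon--Nikodym produces $g=d\nu/dx$, and testing $L$ against indicator functions built from optimal maximizers in the supremum defining $\Phi^\ast$ yields $g\in L^{\Phi^\ast}(\rn)$ with $\|g\|_{L^{\Phi^\ast}(\rn)}\le\|L\|$.

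Once the duality $(L^\Phi(\rn))^\ast=L^{\Phi^\ast}(\rn)$ is in place, I would apply it with $\Phi$ replaced by $\Phi^\ast$; since $\Phi^\ast$ satisfies Assumptions \ref{a1} and \ref{a2} by hypothesis, and its complementary function is $(\Phi^\ast)^\ast=\Phi$ (which also satisfies these assumptions), the same argument gives $(L^{\Phi^\ast}(\rn))^\ast=L^\Phi(\rn)$, and hence $(L^\Phi(\rn))^{\ast\ast}=L^\Phi(\rn)$. The main obstacle is the duality identification itself: because $\Phi$ depends on the space variable $x$, translation-invariance arguments from the classical Orlicz-space theory are unavailable, so the Radon--Nikodym step and the subsequent verification that the density lies in $L^{\Phi^\ast}(\rn)$ require a careful localization that uses Assumption \ref{a1} on \emph{both} $\Phi$ and $\Phi^\ast$ to control the possibly wild variation of $\Phi(x,\cdot)$ across $x$. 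Every other ingredient --- Young's inequality, density of simple functions, iteration of duality --- is structural and comparatively routine once the core identification is secured.
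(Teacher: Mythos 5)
The paper does not actually prove this lemma; it simply cites \cite[Theorem 1.4]{ya17} (Youssfi and Ahmida). So there is no in-paper proof to match against, and your task here is effectively to reconstruct an argument the authors chose to outsource.

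Your sketch follows the standard K\"othe-duality route for Musielak--Orlicz spaces, and the structure is sound: establish $(L^\Phi(\rn))^\ast \cong L^{\Phi^\ast}(\rn)$ using Young's inequality for the embedding $g\mapsto L_g$ and a Radon--Nikodym argument for surjectivity (the latter is where $\Delta_2$ for $\Phi$ enters, ensuring $E^\Phi=L^\Phi$); then apply the identical argument with $\Phi$ replaced by $\Phi^\ast$, using $(\Phi^\ast)^\ast=\Phi$ from lower semicontinuity and convexity; finally check that the composite identification of $(L^\Phi)^{\ast\ast}$ with $L^\Phi$ is the canonical embedding $J$, which follows because both dualities are realized by the same bilinear pairing $\langle f,g\rangle=\int fg$. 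The symmetry of the hypotheses on $\Phi$ and $\Phi^\ast$ is precisely what makes the iteration legal, and you correctly identified that as the structural point.

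One inaccuracy deserves flagging. You write that sets of finite measure have indicators in $L^\Phi(\rn)$ by Assumption \ref{a1}. That assumption only guarantees $\int_K\Phi(x,c)\,dx<\fz$ for \emph{compact} $K$, hence $\mathbf{1}_E\in L^\Phi(\rn)$ only for \emph{bounded} measurable $E$. An unbounded $E$ of finite Lebesgue measure can easily have $\int_E\Phi(x,c)\,dx=\fz$ under Assumption \ref{a1} alone: take $\Phi(x,t)=e^{|x|^2}t^2$ and $E=\bigcup_{k\ge1}B(ke_1,r_k)$ with $r_k^n=e^{-k^2}$, so $|E|<\fz$ but $\int_E e^{|x|^2}\,dx=\fz$, and this $\Phi$ (and its complement) satisfies all of Assumptions \ref{a1}--\ref{a2}. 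The Radon--Nikodym step should therefore be run on bounded sets --- say on each ball $B(\vec 0_n,R)$ --- with the local densities patched by uniqueness into a single $g$ on $\rn$, and membership $g\in L^{\Phi^\ast}(\rn)$ verified afterward via the test-function argument you describe. This is a routine localization rather than a fatal flaw, but as written the claim is false.
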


Let $G\in C^\fz_c(B(\vec{0}_n,1))$ be a standard mollifier. Namely, for any $x\in\rn$,
\begin{equation}\label{2.17}
G(x):=\begin{cases}
C_1e^{-\frac{1}{1-|x|^2}}\ \ &\text{if}\ |x|<1,\\
0\  &\text{if}\ |x|\ge1,
\end{cases}
\end{equation}
where $C_1$ is a positive constant such that $\int_{\rn}G(x)\,dx=1$.
For any $\varepsilon\in(0,\fz)$ and $x\in\rn$,
let $G_\varepsilon(x):=\varepsilon^{-n}G(x/\varepsilon)$.

Now we prove Theorem \ref{t1.1} by using Lemmas \ref{l2.1} through \ref{l2.4}.

\begin{proof}[Proof of Theorem \ref{t1.1}]
We divide the proof into the following three steps according to
the sufficient and the necessary conditions for \eqref{1.1} and
the equivalence of \eqref{1.2} and \eqref{1.3}.

\emph{Step 1)} In this step, we show that, if $|\nabla f|\in L^\Phi(\rn)$,
then \eqref{1.2} holds true,
which further implies that \eqref{1.1} also holds true.

By the triangle inequality on $\cc$, we find that, for any $x\in\rn$,
$r\in(0,\fz)$ and $f,\,g\in L^1_\loc(\rn)$,
\begin{align*}
&\frac{1}{|B(x,r)|}\int_{B(x,r)}|f(y)-f_{B(x,r)}|\,dy\\
&\hs\le\frac{1}{|B(x,r)|}\int_{B(x,r)}|f(y)-g(y)-(f-g)_{B(x,r)}|\,dy
+\frac{1}{|B(x,r)|}\int_{B(x,r)}|g(y)-g_{B(x,r)}|\,dy,
\end{align*}
which implies that, for any $x\in\rn$, $r\in(0,\fz)$ and $f,\,g\in L^1_\loc(\rn)$,
$$M^{\sharp}_{B(x,r)}(f)\le M^{\sharp}_{B(x,r)}(f-g)+M^{\sharp}_{B(x,r)}(g).
$$
From this, the definition of $\rho^\epsilon_{\sharp}(f)$ and Lemma \ref{l2.2}(iii),
we deduce that, for any $\epsilon\in(0,\fz)$ and $\delta\in(0,\fz)$,
there exists a positive constant $C_{(\delta)}$, depending on $\delta$, such that
\begin{equation}\label{2.18}
\rho^\epsilon_{\sharp}(f)\le C_{(\delta)}\rho^\epsilon_{\sharp}
(f-g)+(1+\delta)^\gamma\rho^\epsilon_{\sharp}(g),
\end{equation}
here and hereafter, $\gamma$ is as in Lemma \ref{l2.2}(ii).

Let $f\in\dot{W}^{1,\,\Phi}(\rn)$ and $g\in C^\fz_c(\rn)$.
Then, by \eqref{2.18}, Lemmas \ref{l2.1} and \ref{l2.3}, we conclude that
\begin{align}\label{2.19}
\limsup_{\epsilon\to0^+}\rho^\epsilon_{\sharp}(f)&\le C_{(\delta)}
\max\lf\{\||\nabla(f-g)|\|_{L^\Phi(\rn)},\,
\||\nabla(f-g)|\|_{L^\Phi(\rn)}^\gamma\r\}+(1+\delta)^\gamma\rho_{\Phi}(c_0|\nabla g|),
\end{align}
where $c_0$ is the same as in Theorem \ref{hr17thm}.
From Assumption \ref{a3}, it follows that $C^\fz_c(\rn)$ is
dense in $\dot{W}^{1,\,\Phi}(\rn)$,
which implies that there exists a sequence $\{g_i\}_{i\in\nn}\subset C^\fz_c(\rn)$
such that $\nabla g_i$ converges to $\nabla f$ in $L^\Phi(\rn)$ as $i\to\fz$.
Replacing $g$ by $g_i$ in \eqref{2.19} and letting $i\to\fz$, we obtain
\begin{equation}\label{2.20}
\limsup_{\epsilon\to0^+}\rho^\epsilon_{\sharp}(f)\le \lim_{i\to\fz}
(1+\delta)^\gamma\rho_{\Phi}(c_0|\nabla g_i|)=(1+\delta)^\gamma\rho_{\Phi}(c_0|\nabla f|).
\end{equation}
Letting $\delta\to0^+$ in \eqref{2.20}, we conclude that
\begin{equation}\label{2.21}
\limsup_{\epsilon\to0^+}\rho^\epsilon_{\sharp}(f)\le\rho_{\Phi}(c_0|\nabla f|).
\end{equation}
Moreover, similarly to \eqref{2.18}, we find that, for any $\epsilon\in(0,\fz)$
and $\delta\in(0,\fz)$, there exists a positive constant $C_{(\delta)}$, depending on $\delta$, such that
\begin{equation*}
\rho^\epsilon_{\sharp}(f)\ge\frac{1}{(1+\delta)^\gamma}\rho^\epsilon_{\sharp}(g_i)
-\frac{C_{(\delta)}}{(1+\delta)^\gamma}\rho^\epsilon_{\sharp}(f-g_i),
\end{equation*}
where, for any $i\in\nn$, $g_i$ is as in \eqref{2.20}. By this estimate and
similarly to \eqref{2.21}, we conclude that
$$\liminf_{\epsilon\to0^+}\rho^\epsilon_{\sharp}(f)\ge\rho_{\Phi}(c_0|\nabla f|),
$$
which, together with \eqref{2.21}, further implies that
$$\lim_{\epsilon\to0^+}\rho^\epsilon_{\sharp}(f)=\rho_{\Phi}(c_0|\nabla f|).
$$
Thus, \eqref{1.2} holds true.

\emph{Step 2)} In this step, we show that, if \eqref{1.1} holds true, then $|\nabla f|\in L^\Phi(\rn)$.

Let $G$ be as in \eqref{2.17}. By the triangle inequality on $\cc$
and a change of integration order, we find that, for any
$x\in\rn$, $r\in(0,1)$ and $\delta\in(0,\fz)$,
\begin{align}\label{2.22}
M^{\sharp}_{B(x,r)}(G_\delta\ast f)&=\frac{1}{|B(x,r)|}\int_{B(x,r)}
\lf|(G_\delta\ast f)(y)-(G_\delta\ast f)_{B(x,r)}\r|\,dy\\
&=\frac{1}{|B(x,r)|}\int_{B(x,r)}|G_\delta\ast(f-f_{B(\cdot,r)})(y)|\,dy
\le G_\delta\ast M^{\sharp}_{B(\cdot,r)}(f)(x).\nonumber
\end{align}
For any $\delta\in(0,\fz)$, $x\in\rn$ and $r\in(0,1]$, let
$$h_{r,\,\delta}(x):=\frac{1}{r}M^{\sharp}_{B(x,r)}(G_\delta\ast f)\ \
\text{and}\ \ g_r(x):=\frac{1}{r}M^{\sharp}_{B(x,r)}(f).$$
If $\|h_{r,\,\delta}\|_{L^\Phi(\rn)}\le1$, then, from Lemma \ref{l2.2}(i), it follows that
\begin{align}\label{2.23}
\int_{\rn}\Phi\lf(x,h_{r,\,\delta}(x)\r)\,dx&=\int_{\rn}\Phi\lf(x,\frac{h_{r,\,\delta}(x)}
{\|h_{r,\,\delta}\|_{L^\Phi(\rn)}}\|h_{r,\,\delta}\|_{L^\Phi(\rn)}\r)\,dx
\le\|h_{r,\,\delta}\|_{L^\Phi(\rn)}\le1.
\end{align}
If $\|h_{r,\,\delta}\|_{L^\Phi(\rn)}\in(1,\fz)$, by Lemma \ref{l2.2}(ii), \eqref{2.22}
and Assumption \ref{a4}, we find that
\begin{align}\label{2.24}
\int_{\rn}\Phi\lf(x,h_{r,\,\delta}(x)\r)\,dx&=\int_{\rn}\Phi\lf(x,\frac{h_{r,\,\delta}(x)}
{\|h_{r,\,\delta}\|_{L^\Phi(\rn)}}\|h_{r,\,\delta}\|_{L^\Phi(\rn)}\r)\,dx\\
&\le\|h_{r,\,\delta}\|_{L^\Phi(\rn)}^\gamma\le\|G_\delta\ast g_r\|_{L^\Phi(\rn)}^\gamma
\ls\|M(g_r)\|_{L^\Phi(\rn)}^\gamma\ls\|g_r\|_{L^\Phi(\rn)}^\gamma,\nonumber
\end{align}
which, further implies that, if $\|h_{r,\,\delta}\|_{L^\Phi(\rn)}\in(1,\fz)$,
then $\|g_r\|_{L^\Phi(\rn)}\gs1$.
From this and Lemma \ref{l2.2}(i), we deduce that
$$\|g_r\|_{L^\Phi(\rn)}\ls\int_{\rn}\Phi\lf(x,g_r(x)\r)\,dx.
$$
By this and \eqref{2.24}, we conclude that,
if $\|h_{r,\,\delta}\|_{L^\Phi(\rn)}\in(1,\fz)$, then
\begin{align*}
\int_{\rn}\Phi\lf(x,h_{r,\,\delta}(x)\r)\,dx\ls\lf\{\int_{\rn}
\Phi\lf(x,g_r(x)\r)\,dx\r\}^\gamma,
\end{align*}
which, combined with \eqref{2.23} and the fact that, for any $\epsilon\in(0,\fz)$,
$\int_0^1\psi_\epsilon(r)\,dr=1$, further implies that,
for any $\delta,\,\epsilon\in(0,\fz)$,
\begin{align}\label{2.25}
&\int_0^1\lf\{\int_{\rn}\Phi\lf(x,\frac{1}{r}M^{\sharp}_{B(x,r)}
(G_\delta\ast f)\r)\,dx\r\}^{1/\gamma}\psi_\epsilon(r)\,dr\\
&\hs\ls\int_0^1\lf[\int_{\rn}\Phi\lf(x,\frac{1}{r}M^{\sharp}_{B(x,r)}(f)\r)\,dx
+1\r]\psi_\epsilon(r)\,dr\ls\rho^\epsilon_{\sharp}(f)+1.\nonumber
\end{align}

Assume that $g\in C^2(\rn)$.
Repeating the proof of Lemma \ref{l2.1}, we know that, for any $R\in(0,\fz)$,
\begin{align}\label{2.26}
&\lim_{\epsilon\to0^+}\int_0^1\lf\{\int_{B(\vec{0}_n,R)}
\Phi\lf(x,\frac{1}{r}M^{\sharp}_{B(x,r)}
(g)\r)\,dx\r\}^{1/\gamma}\psi_\epsilon(r)\,dr\\
&\hs=\lf\{\int_{B(\vec{0}_n,R)}\Phi\lf(x,c_0|\nabla g(x)|\r)\,dx\r\}^{1/\gamma}.\nonumber
\end{align}
Assume that $\lim_{\epsilon\to0^+}\rho^\epsilon_{\sharp}(f)<\fz$.
Noticing that $G_\delta\ast f\in C^2(\rn)$ for any $\delta\in(0,\fz)$,
by \eqref{2.25} and \eqref{2.26},
we conclude that, for any $R,\,\delta\in(0,\fz)$,
\begin{align*}
&\lf\{\int_{B(\vec{0}_n,R)}\Phi(x,c_0|\nabla(G_\delta\ast f)(x)|)\,dx\r\}^{1/\gamma}\\
&\hs=\lim_{\epsilon\to0^+}\int_0^1\lf\{\int_{B(\vec{0}_n,R)}
\Phi\lf(x,\frac{1}{r}M^{\sharp}_{B(x,r)}
(G_\delta\ast f)\r)\,dx\r\}^{1/\gamma}\psi_\epsilon(r)\,dr\\
&\hs\le\lim_{\epsilon\to0^+}\int_0^1\lf\{\int_{\rn}\Phi\lf(x,\frac{1}{r}M^{\sharp}_{B(x,r)}
(G_\delta\ast f)\r)\,dx\r\}^{1/\gamma}\psi_\epsilon(r)\,dr
\ls\lim_{\epsilon\to0^+}\rho^\epsilon_{\sharp}(f)+1<\fz.
\end{align*}
Therefore, $\{\nabla(G_\delta\ast f)\}_{\delta>0}$ is a bounded sequence in $L^\Phi(\rn)$.
By this and Lemma \ref{l2.4}, combined with the well-known
Eberlein--$\check{\mathrm S}$mulian theorem, we conclude that there exists a subsequence of
$\{\nabla(G_\delta\ast f)\}_{\delta>0}$ weakly
converging in $L^\Phi(\rn)$ to a function $h\in L^\Phi(\rn)$,
which, together with the definition
of the derivative and the fact that $G_\delta\ast f$ converges to $f$ in $L^1_\loc(\rn)$,
implies that $h=\nabla f$. Thus, $|\nabla f|\in L^\Phi(\rn)$.

\emph{Step 3)} Finally, we show that \eqref{1.2} implies \eqref{1.3}.

Let $\delta\in(0,\fz)$ and
$$g:=\frac{f}{c_0(1+\delta)(\||\nabla f|\|_{L^\Phi(\rn)}+\delta)},$$
where $c_0$ is the same as in Theorem \ref{hr17thm}. From \eqref{1.2} and Lemma \ref{l2.2}(i), it follows that
\begin{align*}
\lim_{\epsilon\to0^+}\rho^\epsilon_{\sharp}(g)&=\rho_{\Phi}(c_0|\nabla g|)=
\rho_{\Phi}\lf(\frac{|\nabla f|}{(1+\delta)(\||\nabla f|\|_{L^\Phi(\rn)}+\delta)}\r)\\
&\le\frac{1}{1+\delta}\rho_{\Phi}\lf(\frac{|\nabla f|}
{\||\nabla f|\|_{L^\Phi(\rn)}+\delta}\r)\le
\frac{1}{1+\delta}.
\end{align*}
Thus, for any sufficiently small $\epsilon\in(0,\fz)$,
$\rho^\epsilon_{\sharp}(g)\le1$, which implies that
$\|g\|^{\epsilon}_{\sharp,\,\Phi}\le1$ and hence
$$\|f\|^{\epsilon}_{\sharp,\,\Phi}\le c_0(1+\delta)(\||\nabla f|\|_{L^\Phi(\rn)}+\delta).
$$
Letting $\delta\to0^+$, we obtain
$$\lim_{\epsilon\to0^+}\|f\|^{\epsilon}_{\sharp,\,\Phi}\le c_0\||\nabla f|\|_{L^\Phi(\rn)}.
$$
Similarly, we also have
$$\lim_{\epsilon\to0^+}\|f\|^{\epsilon}_{\sharp,\,\Phi}\ge c_0\||\nabla f|\|_{L^\Phi(\rn)}.
$$
Therefore, \eqref{1.3} holds true. This finishes the proof of Theorem \ref{t1.1}.
\end{proof}

Now we show Corollary \ref{c1.2}.

\begin{proof}[Proof of Corollary \ref{c1.2}]
To prove Corollary \ref{c1.2}, we only need to show that the Musielak--Orlicz function
$\Phi$ as in Corollary \ref{c1.2} satisfies Assumptions \ref{a1} through \ref{a4} and
its complementary function $\Phi^\ast$ satisfies Assumptions \ref{a1} and \ref{a2}.
We divide the proof into the following four steps according to the type of $\Phi$.

\emph{Step i)} In this step, we show Corollary \ref{c1.2}(i).
To this end, for any $x\in\rn$ and $t\in[0,\fz)$, let $\Phi(x,t):=t^{p(x)}$, where $p(\cdot)$
is as in Theorem \ref{hr17thm}.
In this case, by the assumption that $p$ satisfies the log-H\"older
continuity condition, we easily know that $p^+<\fz$ with $p^+$ as in
\eqref{p-1}, which, together with the assumption $p^-\in(1,\fz)$
with $p^-$ as in \eqref{p-1}, further implies that $1<p^-\le p^+<\fz$.
From this, we deduce that
$\Phi$ satisfies Assumptions \ref{a1} and \ref{a2}.
Moreover, it is known that Assumption \ref{a3}
holds true for such a function $\Phi$ (see, for example, \cite[Theorem 9.1.6]{dhhr11}).
Furthermore, from \cite[Theorem 4.3.8]{dhhr11} (see also \cite{d04,d05,dhhms09,h15}),
it follows that the Hardy--Littlewood maximal operator $M$ is bounded on $L^{p(\cdot)}(\rn)$,
which further implies that $\Phi$ satisfies Assumption \ref{a4}.
Thus, Assumptions \ref{a1} through \ref{a4} hold true for such a $\Phi$.

Moreover, it is easy to see that, for any $x\in\rn$ and
$t\in[0,\fz)$,
\begin{equation}\label{2.27}
\Phi^\ast(x,t)=\frac{1}{q(x)[p(x)]^{q(x)/p(x)}}t^{q(x)},
\end{equation}
where, for any $x\in\rn$,
$q(x)$ is given by the equality $\frac{1}{p(x)}+\frac{1}{q(x)}=1$.
Indeed, by the Young inequality, we know that, for any $k\in(1,\fz)$ and $a,\,b\in[0,\fz)$,
\begin{equation}\label{2.28}
ab\le\frac{a^k}{k}+\frac{b^{k'}}{k'},
\end{equation}
where $k'\in(1,\fz)$ is given by the equality $\frac{1}{k}+\frac{1}{k'}=1$, which,
combined with choosing $a:=s[p(x)]^{1/p(x)}$, $b:=t/[p(x)]^{1/p(x)}$ and $k:=p(x)$ in \eqref{2.28},
further implies that, for any $x\in\rn$ and $s,\,t\in[0,\fz)$,
$$st\le s^{p(x)}+\frac{1}{q(x)[p(x)]^{q(x)/p(x)}}t^{q(x)}.
$$
From this and the definition of $\Phi^\ast$, it follows that,
for any $x\in\rn$ and $t\in[0,\fz)$,
\begin{equation}\label{2.29}
\Phi^\ast(x,t)\le\frac{1}{q(x)[p(x)]^{q(x)/p(x)}}t^{q(x)}.
\end{equation}
Furthermore, by the condition $a^k=b^{k'}$ that is to guarantee
that the equality holds true in \eqref{2.28},
we conclude that, for any $x\in\rn$ and $t\in[0,\fz)$,
$$\Phi^\ast(x,t)\ge\frac{1}{q(x)[p(x)]^{q(x)/p(x)}}t^{q(x)},
$$
which, combined with \eqref{2.29}, further implies that \eqref{2.27} holds true
for any $x\in\rn$ and $t\in[0,\fz)$.
From the fact that $1<p^-\le p^+<\fz$, we deduce that $1<q^-\le q^+<\fz$,
which further implies that $\Phi^\ast$ satisfies Assumptions \ref{a1} and \ref{a2}.
Therefore, the conclusions of Theorem \ref{t1.1} and
Corollary \ref{c1.1} hold true for such a $\Phi$. This finishes the proof of Corollary \ref{c1.2}(i).

\emph{Step ii)} In this step, we show Corollary \ref{c1.2}(ii). To this end,
for any $x\in\rn$ and $t\in[0,\fz)$, let $\Phi(x,t):=\fai(t)$, where $\fai$ is an
Orlicz function satisfying the $\Delta_2$-condition and \eqref{1.4}.
Obviously, such a $\Phi$ satisfies Assumptions \ref{a1} and \ref{a2}. It is well known that
$C^\fz_c(\rn)$ is dense in the homogeneous Orlicz--Sobolev space
$\dot{W}^{1,\,\fai}(\rn)$ (see, for example, \cite[Theorem 8.31]{af03}),
which implies that the function
$\Phi$ satisfies Assumption \ref{a3}. From \cite[Theorem 2.1]{g88}
(see also \cite[Theorem 1.2.1]{kk91}),
we deduce that the Hardy--Littlewood maximal operator $M$ is bounded on $L^{\fai}(\rn)$,
which implies that such a $\Phi$ satisfies Assumption \ref{a4}.
Thus, Assumptions \ref{a1} through \ref{a4} hold true for such a $\Phi$.

Furthermore, by the fact that the complementary function $\Phi^\ast$
is independent of the spatial variable $x$, we conclude that
Assumption \ref{a1} holds true for $\Phi^\ast$.
From \eqref{1.4} and \cite[p.\,25, Theorem 4.2]{kr61}, it follows that $\Phi^\ast$
satisfies the $\Delta_2$-condition. Therefore,
Theorem \ref{t1.1} and Corollary \ref{c1.1} hold true for such a $\Phi$.
which completes the proof of Corollary \ref{c1.2}(ii).

\emph{Step iii)} In this step, we show Corollary \ref{c1.2}(iii).
To this end, for any $x\in\rn$ and $t\in[0,\fz)$, let $\Phi(x,t):=\omega(x)t^{p}$,
where $p\in(1,\fz)$ and $\omega\in A_p(\rn)$.
In this case, the Musielak--Orlicz space $L^{\Phi}(\rn)$ and
the homogeneous Musielak--Orlicz--Sobolev
space $\dot{W}^{1,\,\Phi}(\rn)$ are just the weighted Lebesgue space
$L^p_\omega(\rn)$ and the homogeneous weighted Sobolev space
$\dot{W}^{1,\,p}_\omega(\rn)$, respectively.
Recall that the \emph{weighted Lebesgue space} $L^p_\omega(\rn)$ is defined
to be the space of all Lebesgue measurable functions $f$ such that
$$\|f\|_{L^p_\omega(\rn)}:=\lf[\int_\rn|f(x)|\omega(x)\,dx\r]^{1/p}<\fz$$
and the \emph{homogeneous weighted Sobolev space} $\dot{W}^{1,\,p}_\omega(\rn)$
is defined to be the space of all $f\in L^1_\loc(\rn)$ such that $|\nabla f|\in L^p_\omega(\rn)$.
It is easy to see that $\Phi$ satisfies Assumptions \ref{a1} and \ref{a2}. Moreover,
from \cite[Theorem 2.1.4]{t00}, we deduce that $C^\fz_c(\rn)$ is dense in
the homogeneous weighted Sobolev space $\dot{W}^{1,\,p}_\omega(\rn)$,
which implies that Assumption \ref{a3} holds true for $\Phi$.
Furthermore, it is well known that the Hardy--Littlewood maximal
operator $M$ is bounded on the weighted
space $L^p_\omega(\rn)$ (see, for example, \cite[Theorem 7.1.9]{g14}),
which further implies that $\Phi$ satisfies Assumption \ref{a4}.
Thus, Assumptions \ref{a1} through \ref{a4} hold true for such a $\Phi$.

Moreover, similarly to \eqref{2.27}, we find that, for any $x\in\rn$ and $t\in[0,\fz)$,
$$\Phi^\ast(x,t)=\frac{1}{p'}p^{-\frac{p'}{p}}[\omega(x)]^{-\frac{p'}{p}}t^{p'},
$$
where $p'\in(1,\fz)$ is given by the equality $\frac{1}{p}+\frac{1}{p'}=1$.
It is easy to see that Assumption \ref{a2} holds true for $\Phi^\ast$.
Furthermore, from the fact that $\omega\in A_p(\rn)$ and the
properties of $A_p(\rn)$-weights, we deduce that $\omega^{-\frac{p'}{p}}\in A_{p'}(\rn)$
(see, for example, \cite[Proposition 7.1.5(4)]{g14}),
which implies that $\omega^{-\frac{p'}{p}}\in L^1_\loc(\rn)$ and hence $\Phi^\ast$ satisfies
Assumption \ref{a1}. Therefore, the conclusions of Theorem \ref{t1.1} and Corollary \ref{c1.1} hold
true for such a $\Phi$. This finishes the proof of Corollary \ref{c1.2}(iii).

\emph{Step iv)} In this step, we show Corollary \ref{c1.2}(iv). To this end,
for any $x\in\rn$ and $t\in[0,\fz)$, let $\Phi(x,t):=t^{p}+\omega(x)t^{q}$,
where $1<p<q<\fz$ and $\omega\in A_q(\rn)$. Then it is easy to see that such a $\Phi$ satisfies
Assumptions \ref{a1} and \ref{a2}. Moreover, by \cite[Theorem 1.1]{agsy17},
we find that Assumption \ref{a3} holds true for $\Phi$. From the facts that
the Hardy--Littlewood maximal operator $M$ is bounded on both $L^p(\rn)$ and $L^q_\omega(\rn)$
(see, for example, \cite[Theorem 7.1.9]{g14}),
it follows that $\Phi$ satisfies Assumption \ref{a4}.
Thus, Assumptions \ref{a1} through \ref{a4} hold true for such a $\Phi$.

Moreover, similarly to \eqref{2.27},
we know that, for any $x\in\rn$ and $t\in[0,\fz)$,
$$\Phi^\ast(x,t)\sim\frac{1}{p'}p^{-\frac{p'}{p}}t^{p'}+\frac{1}{q'}q^{-\frac{q'}{q}}
[\omega(x)]^{-\frac{q'}{q}}t^{q'},
$$
where $p',\,q'\in(1,\fz)$  are given, respectively, by $\frac{1}{p}+\frac{1}{p'}=1$
and $\frac{1}{q}+\frac{1}{q'}=1$, and the implicit positive equivalence constants are independent of $x$
and $t$. By this and similarly to the corresponding proof in
Step iii), we conclude that both Assumptions \ref{a1} and \ref{a2} hold true for $\Phi^\ast$.
Therefore, Theorem \ref{t1.1} and Corollary \ref{c1.1} hold true for such a $\Phi$,
which completes the proof of Corollary \ref{c1.2}(iv) and hence of Corollary \ref{c1.2}.
\end{proof}

\bigskip

\noindent Sibei Yang

\medskip

\noindent School of Mathematics and Statistics, Gansu Key Laboratory of Applied Mathematics and
Complex Systems, Lanzhou University, Lanzhou 730000, People's Republic of China

\smallskip

\noindent{\it E-mail:} \texttt{yangsb@lzu.edu.cn}

\bigskip

\noindent Dachun Yang (Corresponding author) and Wen Yuan

\medskip

\noindent Laboratory of Mathematics and Complex Systems (Ministry of Education of China),
School of Mathematical Sciences, Beijing Normal University, Beijing 100875, People's Republic of China

\smallskip

\noindent{\it E-mails:} \texttt{dcyang@bnu.edu.cn} (D. Yang)

\noindent\phantom{{\it E-mails:} }\texttt{wenyuan@bnu.edu.cn} (W. Yuan)

\end{document}